\def\Xint#1{\mathchoice
   {\XXint\displaystyle\textstyle{#1}}%
   {\XXint\textstyle\scriptstyle{#1}}%
   {\XXint\scriptstyle\scriptscriptstyle{#1}}%
   {\XXint\scriptscriptstyle\scriptscriptstyle{#1}}%
   \!\int}
\def\XXint#1#2#3{{\setbox0=\hbox{$#1{#2#3}{\int}$}
     \vcenter{\hbox{$#2#3$}}\kern-.5\wd0}}
\def\dashint{\Xint-}
\newtheorem{thm}{Theorem}[subsection]
\newtheorem{lemm}[thm]{Lemma}
\newtheorem{cor}[thm]{Corollary}
\newtheorem{prop}[thm]{Proposition}
\newtheorem{thmA}{Theorem}[subsection]
\newtheorem{thmB}{Theorem}[subsection]
\theoremstyle{remark}
\newtheorem*{rmkA}{Remark}
\theoremstyle{definition}
\numberwithin{equation}{subsection}
\title{On the min-max width of unit volume three-spheres}
\author{Lucas Ambrozio and Rafael Montezuma}
\address{L. Ambrozio: Institute for Advanced Study \\ Princeton NJ
08540 USA}
\email{lambrozio@ias.edu}
\address{R. Montezuma: Mathematics Department, Princeton University \\ Fine Hall, Washington Road \\ Princeton NJ 08544-1000 USA}
\email{rcabral@princeton.edu}
\begin{document}
\maketitle

\begin{abstract}
	How large can be the width of Riemannian three-spheres of the same volume in the same conformal class? If a maximum value is attained, how does a maximising metric look like? What happens as the conformal class changes? In this paper, we investigate these and other related questions, focusing on the context of Simon-Smith min-max theory.
\end{abstract}

\section{Introduction}

\indent The three-sphere $S^3$, regarded as the set of points in the four-dimensional Euclidean space that are at distance one from the origin, is swept out by a very simple family of two-spheres, that starts at the south pole and finishes at the north-pole. This family is the standard sweep-out of $S^3$ given by the horizontal two-spheres
\begin{equation*}
	\overline{\Sigma}_t =\{(x^1,x^2,x^3,x^4)\in S^3;\, x^4=t\} \quad \text{for each} \quad t\in [-1,1].
\end{equation*}
\noindent New families of two-spheres sweeping out $S^3$ can be produced from the standard one by taking a smooth one-parameter family of diffeomorphisms $F_t : S^3 \rightarrow S^3$,  $t\in [-1,1]$, that are isotopic to the identity map, and defining $\Sigma_t=F_{t}(\overline{\Sigma}_t)$ for each $t$ in $[-1,1]$. Denoting by $\overline{\Lambda}$ the set of all these sweep-outs $\{\Sigma_t\}$, the \textit{Simon-Smith width} of a Riemannian three-sphere $(S^3,g)$ is defined as the geometric invariant
\begin{equation*}
	W(S^3,g) = \inf_{\{\Sigma_t\}\in \overline{\Lambda}} \left( \max_{t\in[-1,1]}area(\Sigma_t,g) \right).
\end{equation*}
\indent A motivation for the introduction of the min-max invariant $W(S^3,g)$ is the following: sweep-outs can be regarded as non-trivial loops in the space of two-spheres in $S^3$, and if the insights from finite dimensional Morse theory were true for the area functional on the space of two-spheres in $(S^3,g)$, then the expectation would be that this number corresponds to the area of a critical point, \textit{i.e.} a minimal two-sphere in $(S^3,g)$, with Morse index at most one. This approach was successful in showing the existence of embedded minimal two-spheres in any Riemannian three-sphere. More precisely, Simon and Smith \cite{SimSmi} proved that, for every Riemannian metric $g$ on $S^3$, the number $W(S^3,g)$ is realised as the total area of a finite collection of disjoint embedded minimal two-spheres (when their areas are counted with appropriate integer multiplicities), and Marques and Neves \cite{MarNev-index} showed that these two-spheres can be constructed so that the sum of their Morse indices is at most one. An excellent introduction to this variational method of producing minimal surfaces in three-manifolds is given in the survey by Colding and De Lellis \cite{ColdeL}. \\
\indent In this paper, we study the \textit{normalised (Simon-Smith) width}, 
\begin{equation*}
	\frac{W(S^3,g)}{vol(S^3,g)^{\frac{2}{3}}},
\end{equation*}
among metrics $g$ that belong to certain subsets of the space of Riemannian metrics on $S^3$. Of particular interest are the sets $[g_0]$ of metrics that are conformal to a given metric $g_0$. For example, we will seek upper bounds for the normalised width on these sets, and describe necessary conditions satisfied by metrics that attain a local maximum.

\subsection{Upper bounds for the normalised width} There are different approaches to a min-max variational theory of the area functional along the lines alluded above, each possibly defining a different min-max invariant, and possibly producing different minimal surfaces. Arguably, the most general and flexible theory is the pioneering one, developed by Almgren \cite{Alm} and Pitts \cite{Pit} in the context of Geometric Measure Theory. The Simon-Smith width $W(S^3,g)$ is bounded from below by the Almgren-Pitts width associated to the fundamental class of $S^3$ \cite{Pit}, because the latter min-max procedure allows more general sweep-outs, not necessarily by smooth embedded two-spheres.\\
\indent Considering examples of Riemannian metrics constructed by Burago and Ivanov \cite{BurIva}, Guth showed that there are unit volume metrics on $S^{3}$ with arbitrarily large Almgren-Pitts width (see \cite{Gut}, Appendix A, and also \cite{Sab-ricci}, Section 7, for further results in this direction). Thus, no upper bound for the normalised width can be valid for all Riemannian metrics on $S^3$. \\
\indent In \cite{Tre}, Treibergs proved the existence of a constant $C>0$ such that, for any metric $g$ on $S^3$ that is the restriction of the Euclidean metric to a convex hypersurface in the four-dimensional Euclidean space, the following inequality holds:
\begin{equation*}
	W(S^3,g) \leq C vol(S^3,g)^{\frac{2}{3}}.
\end{equation*} 
\indent More recently, Glynn-Adey and Liokumovich \cite{GlALio} proved that, for every conformal class of Riemannian metrics on $S^3$ (actually, on any closed manifold), there exists a constant that bounds from above the Almgren-Pitts width of every unit volume metric within this class. The precise statement of their result is actually rather explicit about the geometric dependence of the constant they obtain, see Theorems 5.1 therein; in particular, their estimate implies that all unit volume positive Ricci curvature metrics have their Almgren-Pitts widths uniformly bounded from above as well. The latter bound was also proven by Sabourau \cite{Sab-ricci}. We remark that the Almgren-Pitts width of a three-sphere with positive Ricci curvature is precisely the least possible area of an embedded minimal surface (see \cite{Son}, Theorem 15), while the Simon-Smith width of such three-sphere is equal to the least possible area of an embedded minimal two-sphere (see \cite{MarNev-Duke}, proof of Theorem 3.4). \\
\indent In contrast with the above results, we will check that the normalised Simon-Smith width is unbounded among positive Ricci metrics on $S^3$ (see Theorem \ref{thm-bergermetrics}). On the other hand, at the moment we do not know if the normalised (Simon-Smith) width of metrics in a given conformal class on $S^3$ is bounded from above. Nevertheless, we gathered some evidence that at least certain conformal classes may indeed have Riemannian metrics that are local maxima of the normalised width. And we found that the round metric is indeed a locally maximising metric in its conformal class, a result that we now describe in details.

\subsection{The width of the round three-sphere} The canonical round metric $g_1$ on $S^3$, with constant sectional curvature one, has volume equal to $2\pi^2$ and width equal to $4\pi$ (= area of a totally geodesic equator). Thus,
\begin{equation*}
	\frac{W(S^3,g_1)}{vol(S^3,g_1)^{\frac{2}{3}}} = \sqrt[\leftroot{-1}\uproot{2}\scriptstyle 3]{\frac{16}{\pi}}.
\end{equation*}
\noindent Our first result shows that $g_1$ is a local maximum of the normalised width inside its conformal class. More precisely:
\begin{thm} \label{thm-roundsphere}
	If $g$ is a Riemannian metric on $S^3$ that is conformally flat and has positive Ricci curvature, then
	\begin{equation*}
		W(S^3,g) \leq \sqrt[\leftroot{-1}\uproot{2}\scriptstyle 3]{\frac{16}{\pi}} Vol(S^3,g)^{\frac{2}{3}}.
	\end{equation*}
	Moreover, equality holds if and only if $g$ has constant sectional curvature.
\end{thm}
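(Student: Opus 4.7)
The plan is to combine a Hersch-type conformal balancing argument with an averaging over the space of great $2$-spheres of $(S^3, g_1)$, and then to promote the resulting estimate to an upper bound on the width by using the positive Ricci hypothesis together with the Marques-Neves theorem cited in the introduction.

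Since $g$ is conformally flat on the simply connected manifold $S^3$, Kuiper's theorem lets us write $g = f^2 g_1$ for a positive function $f$ on $S^3$, where $g_1$ denotes the round metric of sectional curvature one. Applying Hersch's lemma to the probability measure $f^3 dV_{g_1}/Vol(S^3, g)$, we may replace $g$ by its pullback under a conformal diffeomorphism of $(S^3, g_1)$ so that the balancing condition
\begin{equation*}
\int_{S^3} x^i f^3 \, dV_{g_1} = 0, \quad i = 1, 2, 3, 4,
\end{equation*}
holds. For each $v \in S^3$, let $\Sigma_v = \{x \in S^3 : x \cdot v = 0\}$ denote the great $2$-sphere orthogonal to $v$, whose $g$-area equals $\int_{\Sigma_v} f^2 \, d\mathcal{H}^2_{g_1}$. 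A Fubini computation on the incidence relation $\{(v,x) \in S^3 \times S^3 : v \cdot x = 0\}$, whose fibre over each $x \in S^3$ is a great $2$-sphere of round area $4\pi$, gives
\begin{equation*}
\int_{S^3} area(\Sigma_v, g) \, dV_{g_1}(v) = 4\pi \int_{S^3} f^2 \, dV_{g_1},
\end{equation*}
and H\"older's inequality $\int f^2 \, dV_{g_1} \leq Vol(S^3, g)^{2/3} (2\pi^2)^{1/3}$ combines with this to yield
\begin{equation*}
\frac{1}{2\pi^2} \int_{S^3} area(\Sigma_v, g) \, dV_{g_1}(v) \leq \sqrt[3]{16/\pi} \, Vol(S^3, g)^{2/3}.
\end{equation*}
Hence some great $2$-sphere $\Sigma_{v_0}$ has $g$-area at most $\sqrt[3]{16/\pi} \, Vol(S^3, g)^{2/3}$.

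The remaining step is to promote this estimate on a single great $2$-sphere to an upper bound on the width. Since $(S^3, g)$ has positive Ricci curvature, the Marques-Neves result cited earlier identifies $W(S^3, g)$ with the least area of an embedded minimal $2$-sphere, which in turn is the infimum over Simon-Smith sweep-outs of the maximum slice area. Considering the sweep-out $\{\overline{\Sigma}_t^{(v)}\}_{t \in [-1,1]}$ of level sets of $x \mapsto x \cdot v$ and using the Hersch normalisation together with the conformal group freedom, one should select $v$ so that the maximum along this sweep-out is attained at $t = 0$, which then gives $W(S^3, g) \leq area(\Sigma_v, g) \leq \sqrt[3]{16/\pi} \, Vol(S^3, g)^{2/3}$. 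For the equality case, equality in H\"older forces $f$ to be constant, so that $g = \lambda g_1$ for some $\lambda > 0$ and hence has constant sectional curvature. The main obstacle is the selection of $v$ in the previous sentence: showing that after Hersch normalisation the positive Ricci hypothesis lets one choose a direction along which the level-set sweep-out peaks at the equator, thereby turning the great-sphere bound into a genuine width bound.
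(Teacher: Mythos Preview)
The proposal has a genuine gap, which you yourself flag in the final sentence. The averaging argument correctly produces a great $2$-sphere $\Sigma_{v_0}$ (great with respect to the round metric $g_1$) whose $g$-area is at most $\sqrt[3]{16/\pi}\,Vol(S^3,g)^{2/3}$. But $\Sigma_{v_0}$ is not minimal in $(S^3,g)$, and the width is not bounded above by the $g$-area of an arbitrary embedded $2$-sphere; it is bounded by the \emph{maximum} slice area of some sweep-out. For the level-set sweep-out $\{\Sigma_t^{(v_0)}\}$ there is no reason the maximum should occur at $t=0$: Hersch balancing normalises the \emph{volume} density $f^3\,dV_{g_1}$, not the family of \emph{area} densities $f^2$ along the slices, and positive Ricci curvature of $g$ gives no concavity for $t\mapsto area(\Sigma_t^{(v)},g)$. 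The Marques--Neves identification of $W$ with the least area of a minimal $2$-sphere does not help either, since $\Sigma_{v_0}$ is not minimal. So ``select $v$ so that the level-set sweep-out peaks at the equator'' is a missing idea, not a technicality; as stated the argument yields only a bound on $\min_v area(\Sigma_v,g)$, not on $\min_v \max_t area(\Sigma_t^{(v)},g)$. The equality analysis inherits the same problem: equality in the width inequality would have to be traced back through the promotion step before it could force equality in H\"older.

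The paper's proof is entirely different and bypasses this obstacle. It runs the normalised Yamabe flow from $g$; by Chow's theorem the flow stays conformally flat with positive Ricci curvature and converges to the round metric $g_\infty$ of the same volume, so $W(t)\to 4\pi$. If $W(0)>4\pi$, pick $\tau$ where $W(t)$ is maximal; Lemma~\ref{lemm-star-necessary} then produces an index-one minimal sphere $\Sigma$ in $(S^3,g(\tau))$ with $\int_\Sigma (r_{g(\tau)}-R_{g(\tau)})\,dA\leq 0$. The Gauss equation, Gauss--Bonnet, and the index-one estimate $\int_\Sigma (Ric(N,N)+|A|^2)\,dA\leq 8\pi$ give $\int_\Sigma R_{g(\tau)}\,dA\leq 24\pi$, hence $W(\tau)\,r_{g(\tau)}\leq 24\pi$; since $r_{g(\tau)}\geq r_{g_\infty}=6$ this forces $W(\tau)\leq 4\pi$, a contradiction. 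Equality is handled by the same computation at $\tau=0$ together with Obata's theorem. The mechanism is differentiation of the width along a geometric flow and an index-one integral estimate, not a static conformal-balancing argument.
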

\indent Our proof relies on estimates for minimal two-spheres with Morse index one in Riemannian three-manifolds. Another ingredient of the argument is the Yamabe flow, introduced by Hamilton and studied in the above setting by Chow \cite{Cho}.\\
\indent We have not investigated further whether an analogous result holds for metrics on the $n$-sphere, $n> 3$, and an appropriate notion of width (\textit{e.g.} the Almgren-Pitts width associated to the fundamental class of $S^n$). On the other hand, and perhaps surprisingly, the analogous statement is false in the case of $S^2$, where all metrics are conformal to the round metric as a consequence of the Uniformisation Theorem. In fact, there are metrics with positive Gaussian curvature on $S^2$ that have larger width than a round two-sphere with the same volume (for an explicit construction, see \cite{ManSch}, Section 4). These metrics are close to the Calabi-Croke metric, a flat metric on $S^2$ with three conical singularities, nicely described with pictures in \cite{Bal}, Section 2.2 (conjecturally, the Calabi-Croke metric attains the supremum of the length of a shortest closed geodesic among all unit volume Riemannian metrics on $S^2$, as discussed in \cite{Cro}, \cite{Bal}, \cite{Sab-calabi}). Although the round metric is not an absolute maximum, it is indeed a local maximum of the normalised width. The proof of this statement was given by Abbondandolo, Bramham, Hryniewicz and Salom\~ao \cite{AbbBraHrySal}, and involves an interesting combination of techniques from both  symplectic and Riemannian geometry. To complete this picture, we mention that the rigidity statement does not hold either in two dimensions: by a theorem of Weinstein \cite{Wei}, all Zoll metrics on $S^2$ have the same normalised width as the round metric, and the round metric admits non-trivial deformations by Zoll metrics (see \cite{Gui}, \cite{AbbBraHrySal} and references therein).  \\
\indent We remark that Souam \cite{Sua} proved that closed embedded index one minimal surfaces in a conformally flat three-sphere with positive Ricci curvature are two-spheres. But the Almgren-Pitts width of positive Ricci metrics is indeed realised as the area of a closed minimal surface of this type (see \cite{Zho} Theorem 1.1). Thus, in the setting of Theorem \ref{thm-roundsphere}, the Almgren-Pitts width coincides with the Simon-Smith width. \\
\indent Finally, we point out that, as all admissible sweep-outs have a leaf that bound a region of the three-sphere whose volume is any given fraction of the total volume of $(S^3,g)$, the width is an upper bound for the maximum value of the isoperimetric profile of $(S^3,g)$. For metrics with positive Ricci curvature, the profile is concave and its maximum is attained by a surface dividing $(S^3,g)$ into two pieces of equal volume (see \cite{Ros}, Theorem 18). Thus, Theorem \ref{thm-roundsphere} has the following direct consequence:
\begin{cor}
	The isoperimetric profile of a conformally flat Riemannian metric with positive Ricci curvature on the three-sphere is bounded from above by the area of the equators of the round three-sphere with the same volume, and equality holds if and only if the metric is round itself.
\end{cor}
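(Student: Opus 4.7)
The plan is to combine Theorem \ref{thm-roundsphere} with the two facts already recorded in the paragraph preceding the corollary. First, for any admissible sweep-out $\{\Sigma_t\}\in \overline{\Lambda}$, the volume enclosed by $\Sigma_t$ varies continuously from $0$ to $V_0:=Vol(S^3,g)$, so by the intermediate value theorem some leaf $\Sigma_{t^*}$ bounds a region of volume exactly $V_0/2$. Second, by Ros \cite{Ros}, Theorem 18, the isoperimetric profile $I_g$ of a positive Ricci three-sphere is concave and attains its maximum at $V=V_0/2$.

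The first observation gives $\max_t area(\Sigma_t,g) \geq area(\Sigma_{t^*},g) \geq I_g(V_0/2)$; taking the infimum over sweep-outs yields $W(S^3,g) \geq I_g(V_0/2)$. Combining this with the second observation and with Theorem \ref{thm-roundsphere} produces the chain
\begin{equation*}
    I_g(V) \,\leq\, I_g(V_0/2) \,\leq\, W(S^3,g) \,\leq\, \sqrt[\leftroot{-1}\uproot{2}\scriptstyle 3]{\frac{16}{\pi}}\, V_0^{\frac{2}{3}}, \qquad V \in [0,V_0].
\end{equation*}
A direct computation (rescale the canonical round metric so that its total volume becomes $V_0$) identifies the right-hand side with the area of an equator of the round three-sphere of volume $V_0$, which is precisely the stated upper bound.

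For rigidity, I would argue in two directions. If equality holds for some $V$, then every inequality in the chain above is an equality; in particular $W(S^3,g)=\sqrt[3]{16/\pi}\, V_0^{2/3}$, and the equality case of Theorem \ref{thm-roundsphere} forces $g$ to have constant sectional curvature. Conversely, when $g$ is round the totally geodesic equator divides $S^3$ into two pieces of equal volume and is the isoperimetric minimiser at that volume, so $I_g(V_0/2)$ equals the equator area and equality is achieved at $V=V_0/2$.

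I do not expect any serious obstacle: the argument is a short concatenation of the width bound of Theorem \ref{thm-roundsphere} with standard properties of the isoperimetric profile under positive Ricci curvature. The only point that requires mild care is the bookkeeping in the final paragraph — ensuring that each inequality in the chain is sharp in the equality case, and matching the constant $\sqrt[3]{16/\pi}$ with the normalised area of a round equator.
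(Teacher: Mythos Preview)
Your proposal is correct and follows essentially the same approach as the paper: the paragraph immediately preceding the corollary already sketches the argument you give, namely that every sweep-out contains a leaf enclosing half the volume, so the width dominates the maximum of the isoperimetric profile (which, by Ros, occurs at $V_0/2$ under positive Ricci), and then Theorem~\ref{thm-roundsphere} supplies the bound and the rigidity. Your write-up simply fills in the details the paper leaves implicit, including the rigidity discussion.
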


\subsection{The width of homogeneous three-spheres} Going beyond conformally flat metrics, we study the Simon-Smith width of homogeneous metrics on $S^3$, \textit{i.e.} metrics whose isometry group acts transitively. When metrics that differ by scaling are identified, the homogeneous metrics on $S^3$ form a two-parameter family. It does not seem to be a trivial task to compute the width of homogeneous three-spheres in terms of these parameters, but we do know that the following general sharp inequality must hold:
\begin{thm}
	Let $g$ be a homogeneous Riemannian metric on the three-sphere with constant scalar curvature 6. Then
		\begin{equation*}
			W(S^3,g) \leq 4\pi,		
		\end{equation*}		 	
		and equality holds if and only if $(S^3,g)$ has constant sectional curvature one.
\end{thm}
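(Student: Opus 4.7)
\medskip

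\noindent\textbf{Plan.} Every homogeneous Riemannian metric on $S^{3}$ is, up to isometry, a left-invariant metric on $SU(2)$, hence corresponds to a positive-definite inner product on $\mathfrak{su}(2)$. Fix once and for all a basis $\{e_{1},e_{2},e_{3}\}$ of $\mathfrak{su}(2)$ with $[e_{i},e_{j}]=2\epsilon_{ijk}e_{k}$, so that the left-invariant metric $\omega_{1}^{2}+\omega_{2}^{2}+\omega_{3}^{2}$ (with $\omega_{i}$ the dual coframe) is the round metric of constant sectional curvature $1$. After an $\mathrm{Ad}$-rotation I may assume that $g$ is diagonal, $g=a\,\omega_{1}^{2}+b\,\omega_{2}^{2}+c\,\omega_{3}^{2}$, and Milnor's formulas express $\mathrm{Scal}(g)$ as an explicit symmetric rational function of $(a,b,c)$. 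The constraint $\mathrm{Scal}(g)=6$ then cuts out a two-dimensional surface in the positive octant, passing through the round point $(1,1,1)$.

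\medskip

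\noindent The core step is to construct, for each such $g$, an admissible Simon-Smith sweep-out $\{\Sigma_{t}\}_{t\in[-1,1]}$ by 2-spheres whose maximum leaf area is bounded by an explicit symmetric function $F(a,b,c)$. Relabelling the basis so that $a=\min\{a,b,c\}$, I would use the one-parameter subgroup $U(1)_{1}\subset SU(2)$ generated by $e_{1}$: its left orbits are great circles of $g$-length $2\pi\sqrt{a}$, foliating $S^{3}$ as a non-trivial circle bundle over $S^{2}$. Lifting a smooth family of simple arcs $\gamma_{t}\subset S^{2}$ joining the two poles yields $U(1)_{1}$-invariant embedded 2-spheres $\Sigma_{t}\subset S^{3}$, and the area of $\Sigma_{t}$ in $g$ becomes an explicit integral along $\gamma_{t}$ of $2\pi\sqrt{a}$ times a Jacobian built from the components of $g$ transverse to the orbits. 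Optimising the family $\gamma_{t}$ and maximising over $t$ produces the bound $F(a,b,c)$. Combining this with the constraint $\mathrm{Scal}(g)=6$ via an AM-GM or Lagrange-multiplier argument on the constraint surface then yields $F(a,b,c)\leq 4\pi$, and unwinding the equality case forces $a=b=c=1$.

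\medskip

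\noindent\textbf{Main obstacle.} The principal difficulty lies in the area computation in the generic case when $a,b,c$ are all distinct: then the full connected isometry group of $g$ is only the three-dimensional left $SU(2)$, the right $U(1)_{1}$-action fails to be isometric, and the 2-sphere leaves $\Sigma_{t}$ are \emph{not} foliated by congruent circles, so the Jacobian in the area integral varies along the orbits in a way that is not immediately tractable. A likely cleaner route is to first settle the Berger case (in which an extra right $U(1)$ acts isometrically, the isometry group becomes $U(2)$, and the entire problem reduces to a genuinely one-variable calculation on the quotient $S^{2}$), and then extend to the general three-eigenvalue case by a continuity or perturbation argument, exploiting the joint smooth dependence of both the scalar curvature and the area bound on $(a,b,c)$.
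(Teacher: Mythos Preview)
The paper's proof takes a completely different route: it runs the Ricci flow from $g$, observes that homogeneity is preserved along the flow, and invokes Lemma~\ref{lemm-no-stable} (a short Killing-field argument showing that homogeneous three-spheres admit no stable minimal two-spheres) to place the entire flow in the regime where the Marques--Neves machinery of \cite{MarNev-Duke} applies. Their Proposition~4.2 bounds how fast the width can decay, the maximum principle bounds how fast the (constant, positive) scalar curvature can blow up, and the comparison argument of their Theorem~4.4 then yields $W\cdot R\le 24\pi$, hence $W\le 4\pi$ when $R=6$, with rigidity inherited from \cite{MarNev-Duke}. No explicit sweep-out is ever constructed.

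Your proposed construction has a concrete error at the sweep-out step. The left action of $U(1)_1\subset SU(2)$ on $S^3$ is \emph{free}: every orbit is a full circle, the quotient map is the Hopf fibration $S^3\to S^2$, and there are no singular fibres and hence no ``poles''. The preimage of an arc $\gamma_t\subset S^2$ is therefore an annulus, and the preimage of a simple closed curve is a torus; there are no left-$U(1)_1$-invariant embedded two-spheres in $S^3$ at all. To obtain $U(1)$-invariant two-spheres one needs a circle action with fixed points. Such an action is available and isometric precisely in the Berger case, where the isometry group enlarges to $U(2)$ --- this is why the one-variable reduction works there, and the paper does carry it out explicitly in Subsection~4.2. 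But when $a,b,c$ are pairwise distinct the connected isometry group is exactly the freely acting left $SU(2)$, so no isometric circle action with fixed points exists and the construction cannot be salvaged. The suggested ``continuity or perturbation'' extension does not close this gap either: perturbation yields only a neighbourhood of the Berger locus, while the constraint surface $\{R=6\}$ in the positive $(a,b,c)$-octant is two-dimensional and non-compact (e.g.\ the rescaled Berger family with $\rho\to 0$ sends $a\to 0$), so a global inequality would still require an independent argument away from that locus --- which is exactly what the Ricci-flow approach supplies uniformly.
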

\indent We remark that some homogeneous metrics with positive scalar curvature do not have positive Ricci curvature, and therefore the above statement is not an immediate consequence of the main theorem in \cite{MarNev-Duke}, but rather of their methods of proof. In fact, the Ricci flow preserves the isometry group of the initial metric, and homogeneous three-spheres contains no embedded stable minimal two-spheres. In particular, the arguments of \cite{MarNev-Duke} can be used to prove the above statement, as explained in more details in Section \ref{Sec-4}.  \\
\indent In any case, if we restrict our attention to the specific family of Berger metrics (which contains metrics with negative, zero and positive scalar curvature), it is actually possible to derive a formula for their widths.  As a result of a qualitative analysis performed in Section \ref{Sec-4}, we conclude that:

\begin{thm} \label{thm-bergermetrics}
	The round metric is a strict local minimum of the normalised width in the one-parameter family of Berger metrics on the three-sphere. Moreover, the normalised width is not uniformly bounded from above among Berger metrics, not even among those that have positive Ricci curvature. 
\end{thm}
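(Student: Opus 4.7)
The plan is to exploit the existence, in every Berger metric $g_\epsilon$, of a canonical minimal two-sphere whose area can be computed in closed form. In adapted Hopf coordinates $(z_1, z_2) = (\sin\phi\, e^{i\alpha}, \cos\phi\, e^{i\beta})$ on $S^3 \subset \mathbb{C}^2$, with $\eta := \sin^2\phi\, d\alpha + \cos^2\phi\, d\beta$ the Hopf contact form, the Berger metrics read
\[
g_\epsilon = d\phi^2 + \sin^2\phi\, d\alpha^2 + \cos^2\phi\, d\beta^2 - (1-\epsilon^2)\,\eta\otimes \eta,
\]
with volume $V(\epsilon) := \operatorname{vol}(S^3, g_\epsilon) = 2\pi^2 \epsilon$ and Ricci eigenvalues $2\epsilon^2,\, 4-2\epsilon^2,\, 4-2\epsilon^2$, so positive Ricci holds precisely for $\epsilon \in (0, \sqrt{2})$. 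A direct mean-curvature computation shows that the equatorial two-sphere $\Sigma_0 := \{x_4 = 0\}$ is minimal in every $(S^3, g_\epsilon)$, with
\[
A(\epsilon) := \operatorname{area}(\Sigma_0, g_\epsilon) = 4\pi \int_{0}^{1} \sqrt{\epsilon^2 + (1-\epsilon^2) u^2}\, du,
\]
and the level sets $\{x_4 = t\}_{t\in[-1,1]}$ form an admissible sweep-out of maximal area $A(\epsilon)$, giving $W(S^3, g_\epsilon) \leq A(\epsilon)$.

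For the strict local minimum, I would first identify $W(g_\epsilon) = A(\epsilon)$ for $\epsilon$ near $1$. Since the positive-Ricci hypothesis holds there, the theorem of Marques--Neves \cite{MarNev-Duke} equates the width with the least area of an embedded minimal two-sphere; and as every great two-sphere in $S^3$ contains a unique Hopf fiber and the $U(2)$-isometry group of $g_\epsilon$ is transitive on such spheres, every great two-sphere is $U(2)$-isometric to $\Sigma_0$ and hence has area $A(\epsilon)$, while a perturbation of the round case (in which the only embedded minimal two-spheres are the great ones) rules out smaller competitors for $\epsilon$ near $1$. The assertion then reduces to calculus on $f(\epsilon) := A(\epsilon)/V(\epsilon)^{2/3}$. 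Differentiating under the integral gives $A'(1) = 4\pi\int_0^1(1-u^2)\,du = \tfrac{8\pi}{3}$ and $A''(1) = \tfrac{8\pi}{3} - 4\pi\int_0^1(1-u^2)^2\,du = \tfrac{8\pi}{15}$. Combined with $V'(1)/V(1) = 1$ and $V''(1)=0$, this yields $f'(1) = A'(1) - \tfrac{2}{3}A(1) = 0$ and
\[
f''(1) \,=\, \frac{A''(1) - \tfrac{4}{3} A'(1) + \tfrac{10}{9} A(1)}{V(1)^{2/3}} \,=\, \frac{64\pi/45}{(2\pi^2)^{2/3}} \,>\, 0,
\]
which proves the strict local minimum at $\epsilon = 1$.

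For the unboundedness, I would let $\epsilon \to 0^+$, which remains inside the positive-Ricci range. The explicit formula gives $A(\epsilon) \to 4\pi\int_0^1 u\,du = 2\pi$ while $V(\epsilon)^{2/3} \to 0$ like $\epsilon^{2/3}$, so $A(\epsilon)/V(\epsilon)^{2/3} \to \infty$. To upgrade this into unboundedness of the actual normalised width, one may use the identification $W = A$ if it is available, or else the robust Gauss--Bonnet-type inequality: for any embedded minimal two-sphere $\Sigma$ in a three-manifold $(M,g)$, combining $\int_\Sigma K_\Sigma\,dA = 4\pi$ with the Gauss equation $K_\Sigma \leq K_M(T\Sigma)$ gives $\operatorname{area}(\Sigma) \geq 4\pi/\max K_M$. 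For Berger metrics the sectional curvatures take the values $\epsilon^2$ and $4-3\epsilon^2$, so for $\epsilon < 1$ one has $W(S^3, g_\epsilon) \geq 4\pi/(4-3\epsilon^2) \to \pi$ as $\epsilon \to 0^+$, which already forces the normalised width to diverge within the positive-Ricci range.

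The principal obstacle is the global identification $W(g_\epsilon) = A(\epsilon)$ on the full positive-Ricci range, which would require classifying the embedded minimal two-spheres of every Berger metric. For the local minimum only the local version near $\epsilon = 1$ is needed, obtainable by perturbation of the round case; and for the unboundedness the Gauss--Bonnet lower bound bypasses the identification entirely.
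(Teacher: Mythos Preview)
Your approach is essentially the same as the paper's: both reduce the problem to analysing the explicit function $A(\epsilon)/V(\epsilon)^{2/3}$, where $A(\epsilon)$ is the area of the canonical minimal two-sphere. Your area formula $A(\epsilon)=4\pi\int_0^1\sqrt{\epsilon^2+(1-\epsilon^2)u^2}\,du$ agrees (after the substitution $u=\cos s$) with the formula the paper quotes from \cite{AmbMon}, and your derivative computations $f'(1)=0$, $f''(1)=64\pi/[45\,(2\pi^2)^{2/3}]>0$ are correct.

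The one genuine gap is the identification $W(g_\epsilon)=A(\epsilon)$ near $\epsilon=1$, which you need for the strict local minimum. Your ``perturbation of the round case'' argument is not rigorous as stated: compactness only tells you that width-realising two-spheres converge to an equator, not that their areas equal $A(\epsilon_n)$ along the way, and an implicit-function argument is obstructed by the three-dimensional Jacobi space of the round equator. The paper bypasses this entirely by invoking the Abresch--Rosenberg classification \cite{AbrRos} (see also \cite{TorUrb}): up to ambient isometry there is a \emph{unique} embedded minimal two-sphere in every Berger sphere, so $W(g_\rho)=A(\rho)$ for \emph{all} $\rho>0$. Citing this result closes your gap immediately and is the intended route.

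Your Gauss--Bonnet argument for unboundedness is a genuinely different and self-contained alternative to the paper's approach (which again relies on $W=A$ via the classification). The bound $W(g_\epsilon)\geq 4\pi/(4-3\epsilon^2)\geq\pi$ for $\epsilon<1$ is correct, and since $V(\epsilon)^{2/3}\to 0$ as $\epsilon\to 0^+$ while staying in the positive-Ricci range, this establishes the divergence of the normalised width without any classification of minimal spheres. This is a nice observation that the paper does not make.
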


\indent We expect all homogeneous metrics on $S^3$ to be local maxima of the normalised width among metrics in their respective conformal classes, or at least critical points in an appropriate sense. The reason for this conjecture is that we were able to check that homogeneous metrics satisfy necessary conditions that any local maximum must satisfy, which we describe in the next Subsection.

\subsection{Locally maximising metrics} Despite our positive result about conformally flat metrics, it is still unclear to us whether a (local) maximum of the normalised width exists in every conformal class on $S^3$. Most likely, this existence question is delicate and one should allow singular metrics when discussing it. Nevertheless, we will show that a Riemannian metric on the three-sphere that is a local maximum of the normalised width inside a conformal class must have a very peculiar geometry, which gives us a hint about where should we look out for them. 
\begin{prop} \label{prop-necessary-equidistribution}
	If $g$ is a Riemannian metric on the three-sphere that is a local maximum of the normalised Simon-Smith width inside its conformal class, then there exists a sequence $\{\Sigma_{i}\}$ of embedded minimal two-spheres in $(S^3,g)$, with Morse index at most one and area at most $W(S^3,g)$, such that, for every continuous function $f$ on $S^3$,
	\begin{equation} \label{eq-introduction-equidistribution}
		\lim_{k\rightarrow +\infty} \frac{1}{\sum_{i=1}^{k}area(\Sigma_i,g)} \sum_{i=1}^{k} \int_{\Sigma_{i}} fdA_{g} = \frac{1}{vol(S^3,g)}\int_{S^3} f dV_{g}.
\end{equation}	 
	If, moreover, $(S^3,g)$ contains no stable minimal two-spheres of area less than or equal to $W(S^3,g)$, then the sequence $\{\Sigma_i\}$ can be taken so that each $\Sigma_i$ has Morse index one and area equal to $W(S^3,g)$.
\end{prop}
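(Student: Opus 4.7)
The plan is to combine a convex-duality (Hahn-Banach) argument with a lower envelope estimate for the Simon-Smith width along conformal perturbations $g_s = e^{2s\phi}g$. Let $\mathcal{M}$ denote the set of embedded minimal two-spheres in $(S^3, g)$ with Morse index at most one and area at most $W(S^3, g)$, and let $K$ denote the weak-$\ast$ closed convex hull, inside the space of Radon probability measures on $S^3$, of $\{area(\Sigma, g)^{-1}\, dA_g|_\Sigma : \Sigma \in \mathcal{M}\}$. The equidistribution \eqref{eq-introduction-equidistribution} is equivalent to the statement $vol(S^3, g)^{-1}\, dV_g \in K$, since a measure in $K$ is approximable by rational-weight convex combinations of the generators, and after clearing denominators such combinations correspond precisely to the normalised time-averages appearing in \eqref{eq-introduction-equidistribution} for suitable sequences $\{\Sigma_i\} \subset \mathcal{M}$.

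The analytic core is a lower envelope for $W$. Using the elementary pointwise inequality $e^{2s\phi} \geq 1 + 2s\phi$, every sweep-out $\{\sigma_t\}$ satisfies
\[
\max_t area(\sigma_t, g_s) \geq \max_t \left[ area(\sigma_t, g) + 2s \int_{\sigma_t} \phi \, dA_g \right],
\]
whence $W(g_s) \geq \inf_{\{\sigma_t\}} \max_t [\text{RHS}]$. A perturbation analysis of this linearised min-max problem (via Danskin/envelope-type arguments, the varifold compactness of near-optimal sweep-outs, and the Simon-Smith/Marques-Neves realisation of the width together with the index bound of \cite{MarNev-index}) yields the one-sided estimates
\[
\liminf_{s \to 0^+} \frac{W(g_s) - W(g)}{s} \geq 2 \inf_V \int \phi \, dV \quad\text{and}\quad \limsup_{s \to 0^-} \frac{W(g_s) - W(g)}{s} \leq 2 \sup_V \int \phi \, dV,
\]
where $V$ ranges over the Simon-Smith/Marques-Neves min-max minimisers for $g$---that is, stationary integral two-varifolds $V = \sum_j n_j |\Sigma^j|$ with $\Sigma^j \in \mathcal{M}$, total mass $W(S^3, g)$, and total Morse index at most one.

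Now impose $\int \phi \, dV_g = 0$, so that $vol(g_s) = vol(g) + O(s^2)$; local maximality of $W/vol^{2/3}$ in the conformal class of $g$ then forces $W(g_s) \leq W(g) + O(s^2)$, hence $\limsup_{s \to 0^+}(W(g_s) - W(g))/s \leq 0$ and $\liminf_{s \to 0^-}(W(g_s) - W(g))/s \geq 0$. Combined with the envelopes above, this yields $\inf_V \int \phi \, dV \leq 0 \leq \sup_V \int \phi \, dV$ for every such $\phi$. Now suppose, for contradiction, that $vol(g)^{-1}\, dV_g \notin K$. Hahn-Banach produces $\phi \in C^0(S^3)$ (smoothable to $C^\infty(S^3)$ while preserving a strict inequality) with $\int \phi \, dV_g = 0$ and $\int_\Sigma \phi \, dA_g \leq -\varepsilon\, area(\Sigma, g)$ for some $\varepsilon > 0$ and every $\Sigma \in \mathcal{M}$. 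Every Simon-Smith/Marques-Neves minimiser $V = \sum_j n_j |\Sigma^j|$ then satisfies $\int \phi \, dV \leq -\varepsilon W(S^3, g) < 0$, contradicting $\sup_V \int \phi \, dV \geq 0$. Hence $vol(g)^{-1}\, dV_g \in K$, which gives \eqref{eq-introduction-equidistribution}.

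For the \emph{moreover} statement, the no-stable-sphere hypothesis forces every Simon-Smith/Marques-Neves minimiser to consist of a single multiplicity-one embedded minimal two-sphere of Morse index exactly one and area exactly $W(S^3, g)$: every component must have positive Morse index, so the total-index constraint of at most one selects a single summand. Re-running the Hahn-Banach argument with $\mathcal{M}$ replaced by this smaller class yields the stronger conclusion. The principal technical step throughout is the derivation of the lower envelope with the extreme values taken over genuine Simon-Smith/Marques-Neves minimisers; this requires a careful compactness analysis for near-optimal sweep-outs of the linearised problem combined with the Morse-index bound of \cite{MarNev-index} applied to the varifold limits of the argmax leaves.
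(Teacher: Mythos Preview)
Your convex-duality framework (Hahn--Banach separation, reduction to showing $\mathrm{vol}(g)^{-1}dV_g$ lies in the weak-$*$ closed convex hull of normalised area measures of min-max spheres) is exactly the paper's route; see Theorem~\ref{thm-abstract-equidistribution}. The gap is in the analytic core, the one-sided envelope
\[
\liminf_{s\to 0^+}\frac{W(g_s)-W(g)}{s}\ \geq\ 2\inf_V\int\phi\,dV,
\]
which you justify only by invoking ``Danskin/envelope-type arguments'' and ``varifold compactness of near-optimal sweep-outs.'' Neither mechanism works here. Danskin's theorem controls $\partial_s\min_x\max_y f(x,y,s)$ when the outer optimisation is over a compact set; the space of sweep-outs is neither compact nor does it carry a topology in which minimising sequences have convergent subsequences. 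More seriously, the Simon--Smith regularity and the Marques--Neves index bound are \emph{not} statements about argmax leaves of arbitrary near-optimal sweep-outs: they apply only after the full min-max machinery (pull-tight, almost-minimising in annuli, replacement) has been run, and that machinery is specific to the area functional, not to your linearised functional $\mathrm{area}+2s\int\phi$. So there is no reason the argmax leaves of near-optimal sweep-outs for the linearised problem should converge to a Simon--Smith minimiser $V$.

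The paper replaces this step by two genuinely different devices. Under property~$(\star)$ it constructs, for each metric along the path, an \emph{optimal} sweep-out whose unique maximal leaf is an index-one minimal sphere (Lemma~\ref{lemm-optimal-sweepouts}, using Meeks--Simon--Yau and the mean-convex min-max of \cite{MarNev-Duke}); this lets one compute $W'$ at almost every time (Lemma~\ref{lemm-star-derivative-width}) and then pass to a limit along a sequence $t_k\to 0$ with $W'(t_k)\leq 0$ via Sharp's compactness (Lemma~\ref{lemm-star-necessary}). Without~$(\star)$ no optimal sweep-outs are available, and the paper instead perturbs the path of metrics, via White's Banach-manifold structure and Smale transversality, to one along which generic metrics are bumpy; at such metrics the width-realising configuration is rigid enough to differentiate (Lemma~\ref{lemm-approximate-derivative-width}), and one again takes limits (Lemma~\ref{lemm-main-general}). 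Your sketch contains neither of these ideas, and without one of them the envelope inequality is unsubstantiated. A secondary point: in the ``moreover'' clause, ruling out higher multiplicity also requires the optimal-sweep-out construction (if $V=n|\Sigma|$ with $n\geq 2$ then $\mathrm{area}(\Sigma)<W(g)$, and the sweep-out through $\Sigma$ from Lemma~\ref{lemm-optimal-sweepouts} contradicts the definition of $W$); the index count alone does not exclude this.
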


\indent Thus, locally maximising metrics contains infinitely many minimal two-spheres, of bounded index and area, that are evenly distributed on $S^3$ in the sense described by \eqref{eq-introduction-equidistribution}. By Sharp's Compactness Theorem \cite{Sha}, it follows that locally maximising metrics are not bumpy in the sense of White \cite{Whi2}, confirming the intuition that such metrics must be very rare. \\
\indent We propose two different proofs of the above result, see Propositions \ref{prop-star-equidistribution} and \ref{prop-equidistribution}. The first proof only works in the less general situation where $(S^3,g)$ contains no stable minimal two-spheres of area bounded from above by $W(S^3,g)$. This extra condition guarantees the existence of certain optimal sweep-outs that are necessary for the argument, which was inspired by the work of Fraser and Schoen \cite{FraSch} on Steklov eigenvalues (see also \cite{Mac}). The second proof, which holds in general and can be translated to the case of other min-max invariants (\textit{e.g.} the Almgren-Pitts width), was influenced by the work of Marques, Neves and Song \cite{MarNevSon}, who established the equidistribution of closed smooth minimal hypersurfaces in generic closed Riemannian manifolds of dimension $n=3,4,5,6,7$; notice the similarity between \eqref{eq-introduction-equidistribution} and Formula (1) in \cite{MarNevSon}. We hope that presenting both proofs together in this paper will not be a pointless exercise, and will at least offer some further insight into similar maximisation problems. \\
\indent All homogeneous Riemannian metrics on $S^3$ satisfy the conclusions of Proposition \ref{prop-necessary-equidistribution}, see Section \ref{Sec-4} for further discussion. \\
\indent The second necessary condition that we deduce compares the normalised width of a locally maximising metric $g_0$ in $[g_{0}]$ to the \textit{Yamabe invariant} of its conformal class,
\begin{equation*}
	\mathcal{Y}(S^3,[g_0]) = \inf_{g\in [g_0]} \frac{\int_{S^3} R_{g}dV_{g}}{vol(S^3,g)^{\frac{1}{3}}},
\end{equation*}  
where $R_g$ denotes the scalar curvature of the metric $g$.

\begin{prop} \label{prop-intro-Yamabe}
	Let $[g_0]$ be a conformal class of Riemannian metrics on the three-sphere with positive Yamabe invariant. Assume that $g_0$ contains no stable minimal surfaces of area smaller than or equal to $W(S^3,g_0)$. If $g_0$ is a local maximum of the normalised width in $[g_0]$, then 
	\begin{equation*}
		W(S^3,g_0) \leq \frac{24\pi}{\mathcal{Y}(S^3,[g_0])}vol(S^3,g_0)^{\frac{2}{3}}.
	\end{equation*}	 
\end{prop}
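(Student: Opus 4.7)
The plan is to combine the equidistribution property from Proposition \ref{prop-necessary-equidistribution} with a Hersch-trick upper bound on the total scalar curvature of an index-one minimal two-sphere, and then to invoke the variational definition of the Yamabe invariant.

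First, since $g_0$ is a local maximum of the normalised width in $[g_0]$ and contains no stable minimal two-sphere of area at most $W := W(S^3,g_0)$, the second assertion of Proposition \ref{prop-necessary-equidistribution} produces a sequence $\{\Sigma_i\}$ of embedded minimal two-spheres of Morse index exactly one and area exactly $W$, equidistributed in the sense that, for every continuous function $f$ on $S^3$,
\begin{equation*}
\lim_{k\to\infty}\frac{1}{kW}\sum_{i=1}^{k}\int_{\Sigma_i} f\, dA_{g_0} = \frac{1}{vol(S^3,g_0)}\int_{S^3} f\, dV_{g_0}.
\end{equation*}

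The core estimate, and the main technical hurdle, is the bound $\int_{\Sigma_i}R_{g_0}\, dA_{g_0}\leq 24\pi$ for every $i$. By the Uniformisation theorem there is a conformal diffeomorphism $\varphi:\Sigma_i\to S^2$; let $\phi_1>0$ denote the first eigenfunction of the Jacobi operator on $\Sigma_i$. By Hersch's balancing trick applied to the measure $\phi_1\, dA_{g_0}$, one can choose a Möbius transformation $\mu$ of $S^2$ so that the three coordinate functions $F^j$ of $F=\mu\circ\varphi:\Sigma_i\to S^2\subset\mathbb{R}^3$ satisfy $\int_{\Sigma_i} F^j \phi_1\, dA_{g_0}=0$ for $j=1,2,3$. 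Since $\Sigma_i$ has Morse index exactly one, the stability form $Q(V)=\int_{\Sigma_i}(|\nabla V|^2-(|A|^2+Ric_{g_0}(N,N))V^2)\, dA_{g_0}$ is nonnegative on the $L^2$-orthogonal complement of $\phi_1$, so $Q(F^j)\geq 0$ for each $j$. Summing over $j$ and exploiting conformality ($\sum_j (F^j)^2\equiv 1$, $\sum_j|\nabla F^j|^2=2\rho$ for $F^*g_{S^2}=\rho\, g_0|_{\Sigma_i}$, and $\int 2\rho\, dA_{g_0}=8\pi\deg F=8\pi$) yields
\begin{equation*}
\int_{\Sigma_i}\bigl(|A|^2+Ric_{g_0}(N,N)\bigr)dA_{g_0}\leq 8\pi.
\end{equation*}
The Gauss equation combined with minimality gives $|A|^2+Ric_{g_0}(N,N)=\tfrac{1}{2}|A|^2+\tfrac{1}{2}R_{g_0}-K_{\Sigma_i}$; substituting, applying Gauss-Bonnet $\int_{\Sigma_i}K_{\Sigma_i}\, dA_{g_0}=4\pi$, and discarding the nonnegative $|A|^2$ term yields the desired bound.

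Finally, applying the equidistribution formula to $f=R_{g_0}$ and using the uniform bound above,
\begin{equation*}
\frac{1}{vol(S^3,g_0)}\int_{S^3} R_{g_0}\, dV_{g_0}=\lim_{k\to\infty}\frac{1}{kW}\sum_{i=1}^{k}\int_{\Sigma_i}R_{g_0}\, dA_{g_0}\leq\frac{24\pi}{W}.
\end{equation*}
By the variational definition of the Yamabe invariant, $\mathcal{Y}(S^3,[g_0])\cdot vol(S^3,g_0)^{1/3}\leq \int_{S^3}R_{g_0}\, dV_{g_0}$; since $\mathcal{Y}(S^3,[g_0])>0$ by hypothesis, rearranging produces the claimed inequality. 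Everything except the middle paragraph is bookkeeping; the Hersch-type test function argument is the single nontrivial ingredient, and it is where the index-one information and the two-dimensional conformal structure of the $\Sigma_i$ are simultaneously exploited.
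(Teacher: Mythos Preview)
Your proof is correct, but it takes a different route from the paper's own argument. The paper does not invoke the equidistribution Proposition~\ref{prop-necessary-equidistribution} at all; instead, it applies Lemma~\ref{lemm-star-necessary} (together with Lemma~\ref{lemm-constant-volume-variations}) directly to the single zero-average test function $f = \dashint_{S^3} R_{g_0}\,dV_{g_0} - R_{g_0}$. This immediately yields \emph{one} index-one minimal two-sphere $\Sigma$ of area $W$ satisfying $W\cdot r_{g_0} \leq \int_\Sigma R_{g_0}\,dA_{g_0}$, and then the same Hersch-type/Gauss--Bonnet estimate you prove gives $\int_\Sigma R_{g_0}\,dA_{g_0}\leq 24\pi$, finishing as you do.

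Your approach factors through the full equidistribution statement (which itself is built on Lemma~\ref{lemm-star-necessary} plus the abstract Hahn--Banach separation Theorem~\ref{thm-abstract-equidistribution}) and then averages the uniform bound $\int_{\Sigma_i} R_{g_0}\,dA_{g_0}\leq 24\pi$ over the sequence. This is logically sound but strictly more machinery than needed: the paper's direct test with $r_{g_0}-R_{g_0}$ extracts exactly the inequality $W\,r_{g_0}\leq 24\pi$ from a single sphere, bypassing both the separation argument and the limiting process. On the other hand, your detailed sketch of the Hersch balancing argument is a useful addition---the paper simply cites \cite{MarNev-Duke}, Appendix~A, for the estimate $\int_\Sigma (|A|^2+Ric(N,N))\,dA\leq 8\pi$.
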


\indent See Subsection 5.3 for a slightly more general statement, and also a further discussion on homogeneous metrics. \\
\indent The proposition above suggests a non-trivial relation between two different conformal invariants, namely the conformal width $W(S^3,[g])$ and the Yamabe invariant $\mathcal{Y}(S^3,[g])$. This matter seems to be of some interest, and is perhaps independent of the problem of existence of maximising metrics.

\subsection{Related results, and structure of the paper} In the aforementioned work \cite{MarNev-Duke}, Marques and Neves proved sharp estimates for the width of three-spheres with positive Ricci curvature in terms of a lower bound for the scalar curvature (the hypothesis on the Ricci curvature cannot be weakened to a hypothesis on the scalar curvature, see \cite{Mon}). Their proof involves the analysis of how does the Simon-Smith width evolve under the Ricci flow. In a previous work, Colding and Minicozzi \cite{ColMin} studied the evolution of a similar min-max invariant, using it to prove finite extinction time of Ricci flows on closed orientable prime non-aspherical three-manifolds. \\
\indent Gromov \cite{Gro} advanced the extremely fruitful idea that widths can be thought as non-linear analogous of the spectrum of linear operators, \textit{e.g.} the Laplace operator. This guiding analogy has lead to many ground-breaking results in the field of minimal surfaces in recent years, which we dare not to survey here, for it would lead to a long digression; instead, we just mention one of them, whose statement closely resembles a classical result about the spectrum of the Laplace operator: Marques, Neves and Liokumovich proved that the (Almgren-Pitts) $p$-widths obey a Weyl Law \cite{MarNevYev}. \\
\indent The proof of the upper bounds for the normalised Almgren-Pitts width in conformal classes mentioned in Subsection 1.1 was also inspired by this analogy, according to its authors \cite{GlALio}. Also in our paper, we have adapted to our purposes some techniques used to study extremising metrics of (normalised) Laplace eigenvalues on closed surfaces (\textit{e.g.} \cite{Nad}) and Steklov eigenvalues on compact surfaces with boundary (\textit{cf}. \cite{FraSch}, Section 2). \\
\indent There are also parallels between some of our results on widths and theorems about \textit{systoles} ($=$ the length of the shortest \textit{non-trivial} closed geodesic). For example, Bavard \cite{Bav} deduced a necessary (and sufficient) condition for a maximising metric of the normalised systole in a conformal class, which is formally very similar to the equidistribution criterion for maximising metrics of the normalised width we prove. \\
\indent In a companion paper \cite{AmbMon}, we investigated a related geometric invariant, the \textit{two-systole} of three-di\-men\-sional real projective spaces $(\mathbb{RP}^3,g)$. If $S^3$ is endowed with a Riemannian metric that admits no stable minimal two-spheres and is the pull-back of a Riemannian metric $g$ on $\mathbb{RP}^{3}$ by the standard projection $\pi : S^3 \rightarrow \mathbb{RP}^3$, then the value of the two-systole of $(\mathbb{RP}^3,g)$ is an upper bound for half the value of width of $(S^{3},\pi^{*}g)$. Equality holds for homogeneous metrics, and we prove sharp inequalities for the normalised two-systole in the conformal classes of homogeneous metrics on $\mathbb{RP}^3$. \\
\indent The main motivation of this work was to advance our understanding of normalised widths as functionals on the space of Riemannian metrics. Our choice to focus on three-spheres and a very particular min-max invariant, the Simon-Smith width, is due in part to presentation reasons. We hope that the reader will recognise that many arguments presented here do extend, with not that much extra effort, to other notions of width (particularly Proposition \ref{prop-necessary-equidistribution}). More importantly, we hope that some of the questions raised here will be answered in the near future. \\

\indent For the reader convenience, we now describe the structure of the paper. In Section \ref{Sec-2}, we prove technical lemmas about the existence of optimal sweep-outs and about derivatives of the width along one-parameter families of metrics. In Section \ref{Sec-3}, we review some properties of the Yamabe flow and prove Theorem \ref{thm-roundsphere}. In Section \ref{Sec-4}, we explain our results about homogeneous metrics. In Section \ref{Sec-5}, we deduce some necessary conditions for local maxima of the normalised width in their conformal classes. Finally, in the Appendix, we state a general compactness result for minimal two-spheres that is used repeatedly, and prove a key abstract result needed in Section \ref{Sec-5}. \\

\noindent \textbf{Acknowledgements}: This project kicked off during a visit of the first author to Princeton University in April 2018. L.A. is grateful to Fernando Marques for the invitation to this academic visit. At the time, and while the main results were obtained, L.A. was a Research Fellow at the University of Warwick, supported by the EPSRC Programme Grant `Singularities of Geometric Partial Differential Equations', reference number EP/K00865X/1. L.A would like to thank Peter Topping for his kind interest in this work.

\section{Optimal sweep-outs and derivatives of the width} \label{Sec-2}

\subsection{The condition $(\star)$ and optimal sweep-outs} An important theme within a min-max theory for the area functional is to identify which critical points realise the value of the width. In certain cases, this amounts to identify optimal sweep-outs containing a given critical point as a leaf. \\
\indent In \cite{MarNev-Duke}, the authors identified a geometric condition that guarantees that every minimal two-sphere in $(S^3,g)$ belong to an optimal sweep-out, which moreover has nice further properties (\textit{cf}. Theorem 3.4 therein). This condition is satisfied, for example, if $g$ has positive Ricci curvature. \\
\indent  A slightly less stringent condition is the following: we will say that a Riemannian three-sphere $(S^3,g)$ satisfies property $(\star)$ when it contains no stable embedded minimal two-sphere with area less than or equal to $W(S^3,g)$. \\
\indent A useful property of the condition $(\star)$ is that it is stable under perturbations. More precisely:
\begin{lemm} \label{lemm-openness}
	The set of Riemannian metrics on $S^3$ that satisfy property $(\star)$ is an open subset of the space of Riemannian metrics on $S^3$. 
\end{lemm}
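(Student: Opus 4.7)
The natural approach is a contradiction argument based on smooth compactness. Suppose $g_0$ satisfies $(\star)$ but there exists a sequence of Riemannian metrics $g_i \to g_0$ in the $C^{\infty}$ topology, with each $(S^3,g_i)$ admitting an embedded stable minimal two-sphere $\Sigma_i$ whose $g_i$-area is at most $W(S^3,g_i)$. My plan is to extract a subsequential limit $\Sigma_{\infty}$ of the $\Sigma_i$'s and show that it is a stable embedded minimal two-sphere in $(S^3,g_0)$ with $g_0$-area at most $W(S^3,g_0)$, contradicting $(\star)$ at $g_0$.

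The first step is to verify that the width is continuous in the metric. This is a standard comparison: for any sweep-out $\{\Sigma_t\} \in \overline{\Lambda}$, the map $g \mapsto \max_{t} area(\Sigma_t, g)$ is continuous in $g$, with modulus of continuity controlled by the $C^{0}$-distance between $g$ and $g_0$, uniformly in $\{\Sigma_t\}$. Taking the maximum over $t$ and the infimum over sweep-outs yields $W(S^3,g_i) \to W(S^3,g_0)$. In particular, the areas $area(\Sigma_i,g_i)$ are uniformly bounded.

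The second step applies the compactness theorem for embedded minimal two-spheres stated in the Appendix, to the sequence $\{\Sigma_i\}$; here each $\Sigma_i$ has Morse index zero in $(S^3,g_i)$, so the hypotheses of bounded area and bounded index are satisfied, and the ambient metrics $g_i$ converge smoothly to $g_0$. After passing to a subsequence, one obtains smooth graphical convergence, possibly with an integer multiplicity $m \geq 1$ away from finitely many points, to an embedded minimal two-sphere $\Sigma_{\infty} \subset (S^3, g_0)$. Since the stability operator varies continuously in the metric, $\Sigma_{\infty}$ is also stable. Passing the areas to the limit, one obtains
\[ m \cdot area(\Sigma_{\infty}, g_0) \; = \; \lim_{i} area(\Sigma_i, g_i) \; \leq \; \lim_{i} W(S^3, g_i) \; = \; W(S^3, g_0), \]
so in particular $area(\Sigma_{\infty}, g_0) \leq W(S^3,g_0)$, the desired contradiction.

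The main obstacle is guaranteeing that the limiting object $\Sigma_{\infty}$ is a genuine embedded minimal two-sphere in $(S^3, g_0)$, rather than degenerating (through bubbling of higher-genus pieces, neck pinching, or loss of connectedness) into a more singular varifold limit; this is precisely what the Appendix compactness result is designed to rule out, leveraging the genus-zero hypothesis in an essential way. Everything else in the argument, continuity of the width and preservation of stability under smooth limits, is standard.
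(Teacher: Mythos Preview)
Your proposal is correct and follows essentially the same approach as the paper: both argue by taking a sequence of metrics violating $(\star)$, using continuity of the width to bound the areas of the stable minimal two-spheres, and applying the Appendix compactness theorem to produce a stable embedded minimal two-sphere of area at most $W(S^3,g_0)$ in the limit. The only cosmetic difference is that the paper phrases the argument as showing the complement is closed, while you phrase it as a contradiction at $g_0$; also, note that item $iii)$ of the Appendix theorem already guarantees $m=1$ and stability of the limit directly in the stable case, so your invocation of multiplicity $m\geq 1$ and continuity of the stability operator is slightly more than needed.
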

\begin{proof}
	We prove that the complement of this set is closed. Let $\{g_{k}\}$ be a sequence of Riemannian metrics on $S^3$, converging smoothly to a Riemannian metric $g$, such that for each positive integer $k$ there exists  an embedded stable minimal two-sphere $\Sigma_k$ in $(S^3,g_k)$ with $area(\Sigma_k,g_k)\leq W(S^3,g_k)$. Since the widths of $(S^3,g_k)$ converges to the width of $(S^{3},g)$, the areas of the two-spheres $\Sigma_k$ are uniformly bounded. By the Compactness Theorem \ref{thm-compactness}, the sequence $\{\Sigma_k\}$ has a subsequence that converges smoothly to an embedded stable minimal two-sphere $\Sigma$ in $(S^{3},g)$, whose area is at most $\limsup area(\Sigma_k,g_{k}) \leq \lim {W(S^3,g_k)}=W(S^{3},g)$.
\end{proof}
\indent The other useful property is the existence of optimal sweep-outs. We outline below the proof of a version of Theorem 3.4 in \cite{MarNev-Duke}, following the same ideas (\textit{cf}. \cite{MazRos}, Proposition 16).
\begin{lemm} \label{lemm-optimal-sweepouts}
	If $g$ is a Riemannian metric on the three-sphere that satisfies property $(\star)$, then there exists an embedded index one minimal two-sphere $\Sigma_0$ in $(S^3,g)$ such that
	\begin{equation*}
		area(\Sigma_0,g) = W(S^3,g).
	\end{equation*} 
	Moreover, $\Sigma_0$ belongs to a sweep-out $\{\Sigma_s\}_{s\in[-1,1]}$ such that
	\begin{itemize}
		\item[$i)$] $area(\Sigma_s,g)< area(\Sigma_0,g)$ for all $s\neq 0$ in $[-1,1]$.
		\item[$ii)$] the family $\Sigma_{s}$ is smooth in a neighbourhood of $s=0$.
		\item[$iii)$] the function $F(s)=area(\Sigma_s,g)$, $s\in [-1,1]$, satisfies $F''(0) < 0$.
	\end{itemize}
\end{lemm}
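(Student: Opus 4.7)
The plan is to extract $\Sigma_0$ as a min-max critical surface under hypothesis $(\star)$, then to produce a local deformation of $\Sigma_0$ via the first Jacobi eigenfunction, and finally to extend it to a global sweep-out by an area-decreasing process on each side of $\Sigma_0$.

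First, I would apply the Simon--Smith min-max theorem together with the Marques--Neves bound on the Morse index of the min-max surface. This produces disjoint embedded minimal two-spheres $\Sigma^{(1)},\dots,\Sigma^{(N)}$ in $(S^3,g)$ with positive integer multiplicities $m_1,\dots,m_N$ such that $\sum_i m_i\,area(\Sigma^{(i)},g)=W(S^3,g)$ and $\sum_i index(\Sigma^{(i)})\leq 1$. Condition $(\star)$ rules out stable components of area at most $W(S^3,g)$, so each $\Sigma^{(i)}$ has index at least one, and combined with the index bound this forces $N=1$ and $index(\Sigma_0)=1$ with $\Sigma_0:=\Sigma^{(1)}$; hence $m\cdot area(\Sigma_0,g)=W(S^3,g)$ for some positive integer $m$.

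Second, because $\Sigma_0$ has Morse index one, the principal eigenfunction $\phi_1$ of its Jacobi operator does not change sign, so we may take $\phi_1>0$. The normal exponential family $\Sigma_s=\{\exp_x(s\phi_1(x)\nu(x)):x\in\Sigma_0\}$, $|s|<\varepsilon$, is smooth in $s$, satisfies $F'(0)=0$ by minimality, and has $F''(0)<0$ by the second-variation formula applied to a negative-eigenvalue direction; shrinking $\varepsilon$ if necessary gives $F(s)<F(0)$ for every $0<|s|<\varepsilon$. The positivity of $\phi_1$ further guarantees that the enclosed volumes are strictly monotone across $s=0$, so the leaves for $s>0$ and those for $s<0$ genuinely lie on opposite sides of $\Sigma_0$, as required for the local family to fit into an admissible sweep-out.

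Third, and this is where I expect the main difficulty, I would extend $\{\Sigma_s\}_{|s|<\varepsilon}$ to an admissible sweep-out $\{\Sigma_s\}_{s\in[-1,1]}$ with $F(s)<F(0)$ for every $s\neq 0$. On each side of $\Sigma_0$ the task is to connect $\Sigma_{\pm\varepsilon/2}$ to a point through a continuous one-parameter family of embedded two-spheres whose areas remain strictly below $F(\varepsilon/2)$. A natural approach is to apply a mean-curvature-type deformation starting from $\Sigma_{\pm\varepsilon/2}$, which decreases area; condition $(\star)$ is precisely what prevents this deformation from accumulating on a stable minimal two-sphere of area $\leq W(S^3,g)$ before it has a chance to collapse to a point. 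Once this global sweep-out is built, it yields $W(S^3,g)\leq F(0)=area(\Sigma_0,g)$, which combined with $m\cdot area(\Sigma_0,g)=W(S^3,g)$ forces $m=1$ and $area(\Sigma_0,g)=W(S^3,g)$, completing the proof.
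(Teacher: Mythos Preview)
Your overall architecture matches the paper's: extract a min-max two-sphere, rule out multiple components and stability via $(\star)$, build the local family from the first Jacobi eigenfunction, extend to a global sweep-out, and read off multiplicity one \textit{a posteriori}. Two points of comparison are worth making.

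For the reduction to a single component with index one, you invoke the Marques--Neves index bound $\sum_i \operatorname{ind}(\Sigma^{(i)})\le 1$ and combine it with $(\star)$ to force $N=1$ and $\operatorname{ind}(\Sigma_0)=1$ in one stroke. The paper instead argues that two disjoint unstable spheres would bound a mean-convex region, inside which Meeks--Simon--Yau produces a stable minimal two-sphere of area at most $W(S^3,g)$, contradicting $(\star)$; it then rules out index $\ge 2$ separately at the end. Your route is shorter, but it imports the full strength of the index estimate, whereas the paper's route is more self-contained at this step.

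For the extension to a global sweep-out, your proposal is vaguer than the paper's and has a genuine gap as stated. You suggest a mean-curvature-type flow and say $(\star)$ prevents accumulation on a \emph{stable} minimal two-sphere; but an area-decreasing flow can equally well converge to an \emph{unstable} minimal two-sphere, or develop singularities, and $(\star)$ says nothing about either. The paper handles this by invoking the Marques--Neves min-max procedure for strictly mean-convex regions (Theorem~2.1 in \cite{MarNev-Duke}): if the extension fails, that procedure produces \emph{some} minimal two-sphere $\Sigma$ disjoint from $\Sigma_0$, and then minimising area in the isotopy class of $\Sigma_0$ inside the region bounded by $\Sigma_0$ and $\Sigma$ (Meeks--Simon--Yau) yields a stable minimal two-sphere of area at most $W(S^3,g)$, contradicting $(\star)$. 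So the role of $(\star)$ is not to keep a flow from stalling, but to furnish a contradiction once any second minimal sphere appears; you should replace the flow heuristic by this argument.
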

\begin{proof} Simon-Smith Min-max Theorem \cite{SimSmi} produces a collection of disjoint embedded minimal two-spheres whose total area, counted with multiplicity, is equal to $W(S^3,g)$. By property $(\star)$, none of them can be stable. Moreover, there can be at most one of them: if not, there would be two disjoint unstable minimal two-spheres bounding a mean convex region $\Omega$ in $(S^3,g)$, and Meeks-Simon-Yau Theorem \cite{MeeSimYau} would produce an embedded stable minimal two-sphere, lying inside $\Omega$, with area at most equal to the area of one of the boundary components of $\Omega$, a number that is at most equal to $W(S^3,g)$.\\
	\indent Let then $\Sigma_0$ be the unstable embedded minimal two-sphere produced above. Notice that its area is at most equal to $W(S^3,g)$ (up to this point, we do not know yet if the Min-max Theorem has produced $\Sigma_0$ with multiplicity $>1$). We now argue that $\Sigma_0$ belongs to a sweep-out satisfying the extra properties $i)$, $ii)$ and $iii)$ above. In a neighbourhood of $\Sigma_0$, the sweep-out is obtained by pushing $\Sigma_0$ in the normal direction with speed given by the first Jacobi eigenfunction, so to guarantee that properties $ii)$ and $iii)$ hold. Using the min-max procedure for strictly mean convex regions devised by Marques and Neves in \cite{MarNev-Duke}, Theorem 2.1, we now just need to observe that, if we could not extend this family to sweep-out satisfying $i)$, we would be able to produce another minimal surface $\Sigma$ disjoint from $\Sigma_0$, and then minimising area in the isotopy class of $\Sigma_0$ inside the region bounded by these two surfaces would produce a contradiction exactly as before. \\
	\indent Since $\Sigma_0$ belongs to an admissible sweep-out satisfying $i)$, we conclude \textit{a posteriori} that $area(\Sigma_0,g)=W(S^3,g)$. It remains only to show that the Morse index of $\Sigma_0$ is one. If this were not the case, the extra direction into which we could strictly decrease the area to second order would allow us to construct a sweep-out violating the definition of the width, exactly as explained at the end of the proof of Theorem 3.4 in \cite{MarNev-Duke}. The Lemma is now proved.
\end{proof}

\subsection{The width of varying metrics} The width is a continuous but non-differentiable function on the space of Riemannian metrics. However, it is possible to differentiate it at most points along paths of metrics. This Subsection is dedicated to some key technical lemmas regarding derivatives of the width.

\begin{lemm}\label{lemm-star-derivative-width}
	If $g(t)$, $t\in [a,b]$, is a smooth family of metrics on $S^3$, then the function 
	\begin{equation*}
		W: t\in [a,b] \mapsto W(S^3, g(t)) \in \mathbb{R}
	\end{equation*} 
	is Lipschitz continuous. \\
	\indent If, moreover, $W$ is differentiable at a point $t_0\in (a,b)$ where the metric $g(t)$ satisfies property $(\star)$, then there exists an embedded index one minimal two-sphere $\Sigma$ in $(S^3,g(t_0))$ such that 
	\begin{equation*}
		area(\Sigma,g(t_0)) = W(S^3,g(t_0)) \,\,\, \text{and} \,\,\,	W^{\prime}(t_0) = \frac{1}{2} \int_{\Sigma}  \text{Tr}_{(\Sigma, g(t_0))}(\partial_t g(t_0))dA_{g(t_0)}. 
	\end{equation*} 
\end{lemm}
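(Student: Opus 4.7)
My plan is to prove the two statements separately, relying on a fixed-sweep-out comparison for Lipschitz continuity and on the optimal sweep-out from Lemma \ref{lemm-optimal-sweepouts} for the derivative formula.

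\textbf{Lipschitz continuity.} For any sweep-out $\{\Sigma_s\}$ and any fixed $s$, the function $t \mapsto \text{area}(\Sigma_s, g(t))$ is smooth with derivative $\tfrac{1}{2}\int_{\Sigma_s} \text{Tr}_{(\Sigma_s, g(t))}(\partial_t g(t))\, dA_{g(t)}$. Since $g$ is smooth on the compact interval $[a,b]$, there is a constant $C$ (depending only on $g$ and $\partial_t g$) such that
\begin{equation*}
\bigl| \partial_t \log \text{area}(\Sigma_s, g(t)) \bigr| \le C,
\end{equation*}
uniformly in $s$ and in the sweep-out. This gives $\text{area}(\Sigma_s, g(t)) \le e^{C|t-t'|}\, \text{area}(\Sigma_s, g(t'))$. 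Taking the max over $s$ and then the infimum over sweep-outs yields $W(t) \le e^{C|t-t'|} W(t')$; by symmetry $|W(t) - W(t')| \le (e^{C|t-t'|}-1)\max\{W(t), W(t')\}$, which, since $W$ is bounded on $[a,b]$, implies Lipschitz continuity.

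\textbf{Derivative formula.} Suppose $W$ is differentiable at $t_0 \in (a,b)$ and $g(t_0)$ satisfies $(\star)$. By Lemma \ref{lemm-optimal-sweepouts}, there is an embedded index one minimal two-sphere $\Sigma = \Sigma_0$ of area $W(S^3, g(t_0))$ lying in a sweep-out $\{\Sigma_s\}_{s\in[-1,1]}$ with $F(s) := \text{area}(\Sigma_s, g(t_0))$ strictly less than $F(0)$ for $s \ne 0$, smooth near $s = 0$, and $F''(0) < 0$. Define
\begin{equation*}
M(t) = \max_{s \in [-1,1]} \text{area}(\Sigma_s, g(t)).
\end{equation*}
By definition of the width, $W(t) \le M(t)$ with equality at $t = t_0$.

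The main task is to show $M$ is differentiable at $t_0$ with the claimed derivative. Using property $(i)$ and the continuity of $F$, there exist $\delta, \eta > 0$ such that $F(s) \le F(0) - \eta$ for $|s| \ge \delta$; by the uniform $\log$-estimate from the first part, for $t$ close to $t_0$ the maximum of $s \mapsto \text{area}(\Sigma_s, g(t))$ is attained inside $|s| < \delta$. In this range, the sweep-out is smooth in $s$, so $(s,t) \mapsto \text{area}(\Sigma_s, g(t))$ is jointly smooth. Since $\partial_s F(0) = 0$ and $\partial_s^2 F(0) = F''(0) < 0$, the implicit function theorem produces a smooth map $t \mapsto s(t)$ with $s(t_0) = 0$ and $\partial_s \text{area}(\Sigma_{s(t)}, g(t)) = 0$, necessarily the unique maximizer near $0$. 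Therefore $M(t) = \text{area}(\Sigma_{s(t)}, g(t))$ is smooth at $t_0$, and, because $\partial_s$ vanishes at the maximizer,
\begin{equation*}
M'(t_0) = \frac{1}{2} \int_{\Sigma_0} \text{Tr}_{(\Sigma_0, g(t_0))}(\partial_t g(t_0))\, dA_{g(t_0)}.
\end{equation*}

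\textbf{Matching the derivatives.} Since $W(t) \le M(t)$ and $W(t_0) = M(t_0)$, the standard touching-from-below argument gives $W'(t_0^+) \le M'(t_0) \le W'(t_0^-)$. Assuming $W$ is differentiable at $t_0$, both one-sided derivatives agree with $W'(t_0)$, forcing $W'(t_0) = M'(t_0)$, which is the desired identity with $\Sigma = \Sigma_0$.

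The step I expect to require the most care is verifying that the global maximum of $\text{area}(\Sigma_s, g(t))$ over the entire parameter interval (not just near $s=0$) remains localized near $s = 0$ for all $t$ close enough to $t_0$, since the sweep-out is only assumed smooth near $s=0$; this is where properties $(i)$ and $(iii)$ of Lemma \ref{lemm-optimal-sweepouts} are essential, together with the uniform continuity estimate from the Lipschitz argument.
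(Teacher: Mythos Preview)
Your proof is correct and follows essentially the same approach as the paper: both use the optimal sweep-out from Lemma~\ref{lemm-optimal-sweepouts}, apply the implicit function theorem (using $F''(0)<0$) to track the maximizing slice $s(t)$ smoothly, localize the global maximum near $s=0$ via continuity, and conclude by the touching argument that $W'(t_0)$ equals the derivative of $M(t)=\text{area}(\Sigma_{s(t)},g(t))$ at $t_0$. Your Lipschitz argument via the uniform bound on $\partial_t\log\text{area}$ is a slightly more explicit version of what the paper abbreviates as ``straightforward'' from comparability of the metrics.
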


\begin{proof}
	The metrics $g(t)$ are all comparable to a fixed metric, say $g(0)$, because the family is varying smoothly. It is then straightforward to show that $W$ is Lipschitz continuous (\textit{cf}. Lemma 4.1 in \cite{MarNev-Duke}, for example). By Rademacher Theorem, $W$ is differentiable at almost every point $t$ in $[a,b]$. \\
	\indent Let us assume that $W$ is differentiable at $t_0\in (a,b)$, and that $g(t_0)$ satisfies property $(\star)$. In order to compute the derivative $W'(t_0)$, we use an optimal sweep-out $\{\Sigma_s\}$ associated to the index one minimal two-sphere $\Sigma_0$ constructed in Lemma \ref{lemm-optimal-sweepouts}. Using the notation of that Lemma, we define the function
	\begin{equation*}
		F: (s,t) \in [-1,1]\times [a,b] \mapsto area(\Sigma_s, g(t)) \in \mathbb{R}.
	\end{equation*}
Because of the properties of $\{\Sigma_s\}$, the function $F$ is differentiable in a neighbourhood of the point $(0,t_0)$, with $F(0, t_0) = \max F(\cdot,t_0) = W(t_0)$, $F_{s}(0,t_0)=0$ and $F_{ss}(0, t_0)<0$ (here, the subscripts denote the partial derivative of $F$ with respect to the variable $s$). Moreover, 
	\begin{equation}\label{eq1-proof1}
	\nabla F(0, t_0) = \bigg( 0, \frac{1}{2}\int_{\Sigma} \text{Tr}_{(\Sigma, g(t_0))}(\partial_t{g}(t_0)) dA_{g(t_0)} \bigg).
	\end{equation}
\indent 	We claim that there exists a smooth function $s = s(t)$, defined on a neighborhood of $t = t_0$, such that $s(t_0) = 0$ and
	\begin{equation}\label{eq2-proof1}
	F(s(t), t) = \max \{F(s, t);\, s \in [-1,1]\}.
	\end{equation}
	In order to verify this, notice first that $F_s(0, t_0)= 0$, because $s = 0$ is the maximum of $F(\cdot, t_0)$. Since $F_{ss}(0, t_0)<0$, the implicit function theorem guarantees the existence of a smooth function $s : (t_0-\eta, t_0+\eta)\rightarrow (-\varepsilon, \varepsilon)$ such that $s(t_0)=0$ and $F_s(s(t), t)=0$ for all $(t_0-\eta, t_0+\eta)$. By continuity, we may assume that $F_{ss}(s, t)<0$ for every $(s,t)$ with $s \in (-\varepsilon, \varepsilon)$ and $t \in (t_0-\eta, t_0 + \eta)$, which guarantees that, for each such fixed $t$, the function $F(\cdot, t)$ restricted to the interval $s \in (-\varepsilon, \varepsilon)$ has only one critical point at $s = s(t)$, that is a strict maximum. Next, possibly after making $\eta$ even smaller, we may also assume that
	\begin{equation*}
	\max \{F(s, t);\, s \in [-1,1]\} = \max \{F(s, t);\, s \in (-\varepsilon, \varepsilon)\} = F(s(t),t)
	\end{equation*}
	for all $t \in (t_0-\eta, t_0 + \eta)$. This choice is possible thanks to the continuity of $F$ and the fact that $F(s, t_0)< F(0, t_0)$, for all $s \neq 0$. Thus, the smooth function $s(t)$ satisfies \eqref{eq2-proof1} whenever $t_0-\eta < t < t_0 +\eta$, as claimed. \\
	\indent 	In order to conclude the proof of the Lemma, define
 \begin{equation*}
		\xi(t) = F(s(t), t)-W(t) \quad \text{for all} \quad t\in (t_0-\eta,t_0+\eta).
\end{equation*} 
The function $\xi$ satisfies $\xi(t)\geq 0$ for all $t\in (t_0-\eta,t_0+\eta)$, because $\{\Sigma_s\}$ is a sweep-out of $(S^3, g(t))$ and
\begin{equation*}
	\max_{s\in[-1,1]}area(\Sigma_s,g(t)) = F(s(t),t) \quad \text{for all} \quad t\in (t_0-\eta,t_0+\eta),
\end{equation*}
by \eqref{eq2-proof1}. Moreover, $\xi$ is differentiable at $t_0$, where $\xi(t_0) = area(\Sigma, g(t_0)) - W(t_0)=0$. Hence,
	\begin{equation*}
	\xi^{\prime}(t_0) = 0 \quad \Rightarrow \quad W^{\prime}(t_0)= \nabla F(s(t_0), t_0)\cdot (s^{\prime}(t_0), 1)
	\end{equation*}
	and the formula for $W'(t_0)$ now follows from \eqref{eq1-proof1}.
\end{proof}

\indent The next lemma is the key ingredient of the proof of Theorem \ref{thm-roundsphere}, Proposition \ref{prop-star-equidistribution} and Proposition \ref{prop-necessary-Yamabe}.

\begin{lemm} \label{lemm-star-necessary}
	Let $g(t)$, $t\in [0,\epsilon)$, be a smooth family of Riemannian metrics on $S^3$ that satisfy property $(\star)$. If 
	\begin{equation*}
		W(S^3,g(0)) \geq W(S^3,g(t)) \quad \text{for every} \quad t\in [0,\epsilon),
	\end{equation*}
	then there exists an embedded index one minimal two-sphere $\Sigma$ in $(S^3,g(0))$ such that
	\begin{equation*}
		area(\Sigma,g(0)) = W(S^3,g(0)) \quad \text{and} \quad 		\int_{\Sigma}\text{Tr}_{(\Sigma,g(0))}(\partial_{t}g(0))\, dA_{g(0)} \leq 0.
	\end{equation*}
\end{lemm}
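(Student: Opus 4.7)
The plan is to combine Lemma \ref{lemm-star-derivative-width} with a Rademacher-type argument to produce a sequence of times $t_k \downarrow 0$ at which $W$ has a non-positive derivative, and then take a smooth limit of the associated index one minimal two-spheres.

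First I would use the Lipschitz continuity of $W(t) = W(S^3, g(t))$, established in Lemma \ref{lemm-star-derivative-width}, to write
\begin{equation*}
W(t) - W(0) = \int_0^t W'(s)\, ds \quad \text{for every } t \in [0,\epsilon).
\end{equation*}
The hypothesis $W(t) \leq W(0)$ implies that this integral is non-positive for every $t$, so for each positive integer $n$ the set
\begin{equation*}
\{s \in (0, 1/n) : W \text{ is differentiable at } s \text{ and } W'(s) \leq 0\}
\end{equation*}
has positive Lebesgue measure (otherwise $W'$ would be strictly positive almost everywhere in $(0,1/n)$, forcing $W(1/n) > W(0)$). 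Picking one such $s$ and calling it $t_k$, I get a sequence $t_k \downarrow 0$ at which $W$ is differentiable and $W'(t_k) \leq 0$.

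Next, since every $g(t)$ (with $t \in [0,\epsilon)$) is assumed to satisfy property $(\star)$, Lemma \ref{lemm-star-derivative-width} applies at each $t_k$ and produces an embedded index one minimal two-sphere $\Sigma_k$ in $(S^3, g(t_k))$ with
\begin{equation*}
\operatorname{area}(\Sigma_k, g(t_k)) = W(t_k) \quad \text{and} \quad \int_{\Sigma_k} \operatorname{Tr}_{(\Sigma_k, g(t_k))}(\partial_t g(t_k))\, dA_{g(t_k)} = 2 W'(t_k) \leq 0.
\end{equation*}
The areas $W(t_k)$ are uniformly bounded (in fact, they converge to $W(0)$), so the Compactness Theorem \ref{thm-compactness} yields, after passing to a subsequence, a smooth convergence $\Sigma_k \to \Sigma$ to an embedded minimal two-sphere $\Sigma$ in $(S^3, g(0))$ (the limit is connected and a sphere by the smooth convergence, possibly with multiplicity; multiplicity must be one since otherwise $\Sigma$ would be a stable minimal two-sphere of area at most $W(0)$, violating property $(\star)$ for $g(0)$).

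Passing to the limit along this subsequence, the area relation gives $\operatorname{area}(\Sigma, g(0)) = W(0)$, the integral inequality persists by smooth convergence of $\Sigma_k$ and of $g(t_k)$, and the index of $\Sigma$ is at most one by upper semicontinuity of the Morse index under smooth convergence (which follows from the continuous dependence of the Jacobi eigenvalues); property $(\star)$ for $g(0)$ rules out index zero, so $\Sigma$ has index exactly one. The step that requires the most care is the last one, namely verifying that the smooth convergence is strong enough to conclude both that $\Sigma$ has index one (and not zero) and that the trace integral inequality is preserved in the limit; both of these, however, follow cleanly from Theorem \ref{thm-compactness} combined with the openness of the condition $(\star)$ that was established in Lemma \ref{lemm-openness}.
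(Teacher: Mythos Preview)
Your proof is correct and follows essentially the same approach as the paper's: obtain a sequence $t_k\downarrow 0$ with $W'(t_k)\le 0$ from the Fundamental Theorem of Calculus, apply Lemma \ref{lemm-star-derivative-width} at each $t_k$, and pass to the limit via the Compactness Theorem \ref{thm-compactness}, using property $(\star)$ for $g(0)$ to rule out multiplicity two and index zero. The only superfluous ingredient is your appeal to Lemma \ref{lemm-openness} at the end: since the hypothesis already assumes every $g(t)$ satisfies $(\star)$, openness is not needed here.
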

\begin{proof}
	By the first part of Lemma \ref{lemm-star-derivative-width}, the function
	\begin{equation*}
		W(t) = W(S^3,g(t)), \quad t\in [0,\epsilon),
	\end{equation*}
	is Lipchitz continuous on every interval $[0,\eta]\subset [0,\epsilon)$. Since $W$ attains a maximum at $t=0$, we have
	\begin{equation*}
		0 \geq W(t) - W(0) = \int_{0}^{t} W'(\xi)d\xi \quad \text{for every} \quad t\in [0,\eta],
	\end{equation*}
	by the Fundamental Theorem of Calculus. It follows that there exists a sequence $t_k\in [0,\epsilon)$ converging to zero such that $W$ is differentiable at $t_k$ and $W'(t_k)\leq 0$ for all $k$. \\
	\indent By the second part of Lemma \ref{lemm-star-derivative-width}, there exists a sequence of embedded index one minimal two-spheres $\Sigma_k$ in $(S^3,g(t_k))$, with area $W(t_k)$, such that
	\begin{equation*}
	\frac{1}{2}\int_{\Sigma_k}\text{Tr}_{(\Sigma_k,g(t_k))}(\partial_{t}g(t_k))\, dA_{g(t_k)} = W'(t_k)\leq 0.
	\end{equation*}
	\indent The conclusion now follows by the Compactness Theorem \ref{thm-compactness}: since stable minimal two-spheres do not exist in $(S^3,g(0))$ with area less than $W(S^3,g(0))$ by assumption, then, up to a subsequence, the two-spheres $\Sigma_k$ converge graphically, smoothly and with multiplicity one to an embedded index one minimal two-sphere $\Sigma$ in $(S^3,g(0))$ with area $\lim_{k\rightarrow 0} W(t_k) = W(0)= W(S^3,g(0))$. Moreover, by the graphical convergence,
	\begin{equation*}
		\int_{\Sigma}\text{Tr}_{(\Sigma,g(0))}(\partial_{t}g(0))\, dA_{g(0)}= \lim_{k\rightarrow \infty} \int_{\Sigma_k}\text{Tr}_{(\Sigma_k, g(t_k))}(\partial_{t}g(t_k))\, dA_{g(t_k)} \leq 0.
	\end{equation*} 
\end{proof}

\indent In order to deal with metrics that do not satisfy property $(\star)$, we prove the following technical approximation lemma, which appears also in \cite{MarNevSon} in the context of the multi-parameter Almgren-Pitts min-max theory. \\
\indent In the statement below, $\Gamma_q$ denotes the set of Riemannian metrics of class $C^{q}$ on $S^3$, $q\geq 2$.

\begin{lemm}\label{lemm-approximate-derivative-width}
Let $q \geq 4$ be an integer, and $g : [0,1] \rightarrow \Gamma_q$ be a smooth embedding. Then, there exist smooth embeddings $h : [0,1]\rightarrow \Gamma_q$, which are arbitrarily close to $g$ in the smooth topology, and subsets $J \subset [0,1]$ such that the following properties hold:
\begin{itemize}
\item[$i)$] The set $J$ has full Lebesgue measure on $[0,1]$;
\item[$ii)$] the function $W(S^3,h(t))$ is differentiable at every $\tau\in J$; and
\item[$iii)$] for each $\tau \in J$, there exist a finite collection $\{\Sigma_1, \ldots, \Sigma_N\}$ of disjoint embedded minimal two-spheres of class $C^q$ in $(S^3, h(\tau))$, and a collection $\{n_1, \ldots, n_N\}$ of positive integers, such that
\begin{align*}
 W(S^3, h(\tau)) = \sum_{k=1}^N n_k\cdot area(\Sigma_k, h(\tau)), \quad  \sum_{k=1}^N ind_{h(\tau)}(\Sigma_k) \leq 1,\\
 \quad \text{and} \quad \frac{d}{dt}\bigg|_{t=\tau} W(S^3, h(t)) = \frac{1}{2}\sum_{k=1}^N n_k \int_{\Sigma_k} \text{Tr}_{(\Sigma_k, h(\tau))}(\partial_t h(\tau)) dA_{h(\tau)}. 
\end{align*}
\end{itemize}
\end{lemm}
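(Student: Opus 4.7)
The plan is to combine White's bumpy metric theorem with the derivative-of-width arguments of Lemma \ref{lemm-star-derivative-width} and the optimal sweep-out construction of Lemma \ref{lemm-optimal-sweepouts}, following the template established by Marques, Neves and Song \cite{MarNevSon}.

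First I would perturb $g$ to a smooth embedding $h:[0,1]\to \Gamma_{q}$, arbitrarily close to $g$ in the smooth topology, so that for almost every $t\in [0,1]$ the metric $h(t)$ is bumpy, in the sense that every closed embedded minimal two-sphere in $(S^{3},h(t))$ is non-degenerate. The existence of such a perturbation rests on White's bumpy metric theorem (which asserts that bumpy $C^{q}$ metrics form a residual subset of $\Gamma_{q}$), combined with a transversality/Sard-type argument applied to the one-parameter family $h$. Denote by $J_{0}$ the (full-measure) subset of $t\in [0,1]$ where $h(t)$ is bumpy, and let $J\subset J_{0}$ be the subset of $t\in J_{0}$ at which the function $t\mapsto W(S^{3},h(t))$ is differentiable. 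Because this function is Lipschitz by the first part of Lemma \ref{lemm-star-derivative-width}, Rademacher's theorem gives that $J$ has full Lebesgue measure in $[0,1]$, establishing property $i)$.

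Next I would analyze each $\tau\in J$. The Simon-Smith min-max theorem together with the Marques-Neves index bound provides a disjoint family $\{\Sigma_{1},\ldots,\Sigma_{N}\}$ of embedded minimal two-spheres in $(S^{3},h(\tau))$ and positive integers $\{n_{1},\ldots,n_{N}\}$ such that $W(S^{3},h(\tau))=\sum_{k} n_{k}\,area(\Sigma_{k},h(\tau))$ and $\sum_{k}ind_{h(\tau)}(\Sigma_{k})\leq 1$. Since $h(\tau)$ is bumpy, each $\Sigma_{k}$ is non-degenerate, and the implicit function theorem for the minimal surface system produces smooth one-parameter continuations $\Sigma_{k}(t)$ in $(S^{3},h(t))$ for $t$ near $\tau$, with $\Sigma_{k}(\tau)=\Sigma_{k}$. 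I would then build an optimal sweep-out of $(S^{3},h(\tau))$ by combining the normal deformation along the first Jacobi eigenfunction at the at most one index-one component (as in Lemma \ref{lemm-optimal-sweepouts}) with graphical normal variations at the stable components, so that this sweep-out attains its maximum area $W(S^{3},h(\tau))$ precisely at the configuration $\{\Sigma_{k}\}$ counted with multiplicities $n_{k}$. Transporting this sweep-out to $(S^{3},h(t))$ via the continuations $\Sigma_{k}(t)$ and running the implicit function argument of Lemma \ref{lemm-star-derivative-width} verbatim on the function $F(s,t)=\sum_{k} n_{k}\,area(\Sigma_{k,s},h(t))$ (where $s$ is the sweep-out parameter near the maximum), I would obtain at $\tau$ the identity
\[
W'(\tau)=\frac{1}{2}\sum_{k=1}^{N}n_{k}\int_{\Sigma_{k}}\mathrm{Tr}_{(\Sigma_{k},h(\tau))}(\partial_{t}h(\tau))\,dA_{h(\tau)},
\]
via the vanishing of the first $s$-derivative and the first variation formula for area.

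The main obstacle is the approximation step. The bumpy metric theorem is a Baire-genericity statement, and upgrading it to a full-measure conclusion along a one-parameter path requires viewing the non-bumpy locus as a countable union of positive-codimension subsets of $\Gamma_{q}$ and perturbing $h$ so that it meets each stratum transversally. A secondary technical point is that the optimal sweep-out at $\tau$ must simultaneously realize the correct multiplicities $n_{k}$ and be admissible in the Simon-Smith sense (continuous in the Hausdorff metric, consisting of smooth two-spheres except at finitely many times). This is achieved by combining the Marques-Neves construction from \cite{MarNev-Duke} applied near the index-one component with the standard foliation-by-parallel-surfaces construction near each stable component; the verifications are routine adaptations of Lemma \ref{lemm-optimal-sweepouts} but account for most of the technical bookkeeping in the proof.
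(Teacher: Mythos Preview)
Your perturbation step is essentially the paper's: White's Banach manifold structure on pairs (metric, minimal two-sphere), Smale transversality to make $h(\tau)$ a regular value of the projection for a.e.\ $\tau$, and Rademacher to ensure differentiability of the width on a full-measure set.

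The gap is in your computation of $W'(\tau)$. You propose to build an optimal sweep-out of $(S^{3},h(\tau))$ whose maximal leaf realises the configuration $\{\Sigma_{k}\}$ with multiplicities $n_{k}$, and then run the implicit-function argument of Lemma~\ref{lemm-star-derivative-width}. But a Simon--Smith sweep-out consists of \emph{single} embedded two-spheres (images of the standard horizontal spheres under ambient isotopies); no leaf can equal a disjoint union of several two-spheres, nor a single two-sphere taken with multiplicity $>1$. The construction in Lemma~\ref{lemm-optimal-sweepouts} works precisely because property $(\star)$ forces $N=1$ and $n_{1}=1$. Without $(\star)$ --- which is exactly the regime this lemma is designed to handle --- there is no admissible sweep-out whose maximal leaf is the varifold $\sum_{k} n_{k}\Sigma_{k}$, and the function $F(s,t)=\sum_{k} n_{k}\,area(\Sigma_{k,s},h(t))$ you write down is not the maximal area along an admissible sweep-out, so it does not bound $W(S^{3},h(t))$ from above and the comparison $\xi(t)\geq 0$ at the heart of Lemma~\ref{lemm-star-derivative-width} fails.

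The paper avoids this obstacle altogether. Instead of constructing anything at $\tau$, it picks an arbitrary sequence $t_{i}\to\tau$, applies Simon--Smith and the Marques--Neves index bound \emph{at each $t_{i}$} to obtain collections $\{\Sigma(i,j)\}$ with multiplicities $\{n(i,j)\}$, and passes to the limit using Sharp's compactness (the convergence is multiplicity one and graphical because $h(\tau)$ admits no minimal two-sphere with Jacobi fields). The derivative formula then comes not from a sweep-out but from White's moduli-space structure: since $h(\tau)$ is a regular value of the projection $\Pi$, each limit pair $(h(\tau),\Sigma_{j})$ lies on a one-dimensional submanifold of $\mathcal{M}_{q}$ that eventually contains all the $(h(t_{i}),\Sigma(i,j))$, so the difference quotient $\bigl(area(\Sigma(i,j),h(t_{i}))-area(\Sigma_{j},h(\tau))\bigr)/(t_{i}-\tau)$ is simply the derivative of the area functional along this curve, namely the first-variation integral. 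Summing over $j$ with the multiplicities and using that $W'(\tau)$ exists yields the claimed identity.
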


\begin{proof}
	(\textit{Cf}. \cite{MarNevSon}, Lemma 2). According to White \cite{Whi}, the set of all pairs $(\gamma,[u])$ consisting of a $C^q$ Riemannian metric $g$ on $S^3$ and a (equivalence class of a) minimal embedding $u : S^2 \rightarrow S^3$ of class $C^{2,\alpha}$ with respect to the metric $\gamma$ form a separable $C^{q-2}$ Banach manifold $\mathcal{M}_q$. We remark that the regularity theory says that such embeddings are actually as regular as the metric $\gamma$; in particular, they are of class $C^q$. Moreover, the projection $\Pi: (\gamma,[u])\in \mathcal{M}_q \mapsto \gamma \in \Gamma_q$ is a $C^{q-2}$ Fredholm map with Fredholm index zero, and the set of regular values is precisely the set of metrics $\gamma$ admitting no embedded minimal two-spheres with Jacobi fields. \\
	\indent Given any smooth embedding $g : [0,1] \rightarrow \Gamma_q$, we can therefore use Smale's Transversality Theorem to perturb it to another smooth embedding $h : [0,1] \rightarrow \Gamma_q$, arbitrarily close to $g$ in the smooth topology, in such way that the subset $J$ of points $\tau\in [0,1]$ such that $h(\tau)$ is a regular value of $\Pi$ and the function $W(S^3,h(t))$ is differentiable at $\tau$ has full measure (this follows by Sard-Smale Theorem and Rademarcher Theorem, given that the width of the varying metrics $h(t)$ is a Lipschitz continuous function by the proof of Lemma \ref{lemm-star-derivative-width}). \\
	\indent In particular, given any point $\tau\in J$, and a chosen sequence $t_i \rightarrow \tau$ in $[0,1]$, we have
	\begin{equation} \label{eq1-proof2}
		\frac{d}{dt}	\bigg|_{t=\tau} W(S^3,h(t)) = \lim_{i\rightarrow \infty} \frac{W(S^3,h(t_i))- W(S^3,h(\tau))}{t_i-\tau}.
	\end{equation}	  \\
	\indent Now we invoke Simon-Smith Min-max Theorem and the index estimates of Marques and Neves (see \cite{MarNev-index}, Theorem 1.2 and Subsection 1.3 therein): for each $i$, there exists a collection $\{\Sigma(i,1),\ldots, \Sigma(i,N_i)\}$ of mutually disjoint embedded minimal two-spheres and a collection $\{n(i,1),\ldots,n(i,N_i)\}$ of positive integers such that 
	\begin{equation} \label{eq2-proof2}
		W(S^3,h(t_i)) = \sum_{j=1}^{N_i}n(i,j)\cdot area(\Sigma(i,j),h(t_i))
	\end{equation}
	and
	\begin{equation*}
		 \sum_{j=1}^{N_i}ind_{h(t_i)}(\Sigma(i,j)) \leq 1.
	\end{equation*}
	We claim that that there exists a constant $C> 0$ such that the inequalities
\begin{equation*}
 C \leq area(\Sigma(i,j), h(t_i)) \leq 2W(S^3,h(\tau))
\end{equation*}
hold for large enough $i$ and all $j=1,\ldots,N_i$. In fact, as the metrics $h(t_m)$ are converging to $g$, the constant $C>0$ is a uniform constant coming from the monotonicity formula for minimal surfaces (see \cite{Sim}), while the upper bound follows from \eqref{eq2-proof2} and the convergence of the numbers $W(S^3,h(t_i))$ to $W(S^3,h(\tau))$. The claim implies that, for all sufficiently large $i$, the integers $N_i$ and the multiplicities $n(i,j)$, $j=1,\ldots, N_i$, are uniformly bounded. In particular, possibly after passing to appropriate subsequences (which we do not relabel), we can assume that $N_i=N$ for some fixed integer $N$, and $n(i,j)=n_j$ is independent of $i$ for every $j = 1, \ldots, N$. \\
\indent Now, recalling that $(S^3,h(\tau))$ contains no minimal two-sphere admitting Jacobi fields (by the definition of $J\ni \tau$), we apply the Compactness Theorem \ref{thm-compactness} (and the remark after its statement): up to subsequences, the collections of two-spheres $\{\Sigma(i,1),\ldots,\Sigma(i,N)\}_{i}$ and integers $\{n_1,\ldots,n_N\}$ are such that each sequence $\{\Sigma(i,j)\}_i$ converges smoothly and graphically with multiplicity one to an embedded minimal two-sphere $\Sigma_j$ in $(S^3,h(\tau))$, where the collection $\{\Sigma_1,\ldots,\Sigma_N\}$ consists of surfaces satisfying
	\begin{equation} \label{eq3-proof2}
		W(S^3,h(\tau)) = \sum_{j=1}^{N}n_j\cdot area(\Sigma_j,h(\tau))
	\end{equation}
	and
	\begin{equation*}
		\sum_{j=1}^{N}ind_{h(\tau)}(\Sigma_j) \leq 1.
	\end{equation*}
	Moreover, for each $i=1,\ldots,N$ we can compute
	\begin{equation} \label{eq4-proof2}
		\lim_{i\rightarrow \infty} \frac{area(\Sigma(i,j),h(t_i)) - area(\Sigma_j,h(\tau))}{t_i-\tau} = \frac{1}{2} \int_{\Sigma_j} \text{Tr}_{(\Sigma_j,h(\tau))}(\partial_t h(\tau)) dA_{h(\tau)}.
	\end{equation}
	Indeed, notice that the pair $(h(\tau),\Sigma_j)$ belongs to a one-di\-men\-sio\-nal submanifold in $\mathcal{M}_q$ containing the all the pairs $(h(t_m),\Sigma(i,j))$ eventually, by virtue of the fact that $\tau\in J$ is a regular value of the projection $\Pi$ and the graphical convergence of $\{\Sigma(i,j)\}_i$ to $\Sigma_i$. Thus, in equation \eqref{eq4-proof2}, we have just calculated the derivative the area functional at $t=\tau$ along this curve in the space $\mathcal{M}_q$. \\
	\indent The final remark is that any two surfaces $\Sigma_j$ and $\Sigma_k$ in the collection $\{\Sigma_1,\ldots,\Sigma_j\}$ are either disjoint or equal. Indeed, if $\Sigma_j$ intersects $\Sigma_k$, the intersection contain infinitely many points by the Maximum Principle. Therefore, there exists a point $p \in \Sigma_j\cap \Sigma_k$ around which both convergences $\Sigma(i,j)\rightarrow \Sigma_j$ and $\Sigma(i,k)\rightarrow \Sigma_k$ are graphical. Since $\Sigma(i,j)$ and $\Sigma(i,k)$ are disjoint for all $i$, it follows that $\Sigma_k$ is in one of the sides of $\Sigma_j$ near $p$. By the Maximum Principle, these two surfaces must coincide. Thus, adding the multiplicities of equal surfaces, and renaming the two-spheres $\Sigma_j$ if necessary, item $iii)$ and the formula for the derivative of $W(S^3,h(\tau))$ at $t=\tau$ follows from \eqref{eq1-proof2} substituting the values of the widths given by \eqref{eq2-proof2} and \eqref{eq3-proof2} and using the formula \eqref{eq4-proof2} for the derivatives of the area.
\end{proof}

\section{The width of the round three-sphere} \label{Sec-3}

\subsection{Review on the Yamabe Flow} Introduced by R. Hamilton as the backwards gradient flow of the \textit{normalised total curvature (Hilbert-Einstein) functional},
\begin{equation*}
	E(g) = \frac{\int_{M} R_g dV_g}{vol(M,g)^{(n-2)/n}},
\end{equation*}
restricted to a conformal class of metrics on a closed manifold $M^n$, $n\geq 3$, the normalised Yamabe flow is given by 
\begin{equation} \label{eq-normalised-YF}
	\frac{\partial }{\partial t} g = (r_g-R_{g}) \, g,
\end{equation} 
where $R_g$ denotes the scalar curvature of the metric $g$ and 
\begin{equation*}
 r_g = \frac{1}{vol(M,g)}\int_{M} R_g dV_g
\end{equation*}
denotes its average value over $(M,g)$. \\ 
\indent Since the functional $E$ is invariant under scaling, the normalised Yamabe flow keeps the volume constant. Moreover, the functional $E$ is monotonically non-increasing as the time goes on (as it happens in all backwards gradient flows). In particular, $r_{g(0)} \geq r_{g(t)}$ for all $t\geq 0$ while the flow exists. \\
\indent One of the first papers on the Yamabe flow was written by Chow \cite{Cho}, who restricted his attention to conformally flat metrics, \textit{i.e} Riemannian metrics in the conformal class of the standard round metric of the $n$-sphere. The theory developed greatly since then; we refer the reader to the recent survey \cite{BreMar}, Section 6, for further information. The results needed for the purposes of this paper were already contained in \cite{Cho}, and we summarise them in the following
\begin{prop} (\textit{cf}. Theorem 1.2 and Lemma 2.9 in \cite{Cho})\label{prop-Yamabe} \\
	\indent Let $g_0$ be a conformally flat Riemannian metric on $S^3$ with positive Ricci curvature. Then 
	\begin{itemize}
		\item[$i)$] the normalised Yamabe flow \eqref{eq-normalised-YF} starting at $g(0)=g_0$ has a unique solution $g(t)$ that exists for all $t\geq 0$; 
		\item[$ii)$] for every $t\geq 0$, the metric $g(t)$ has positive Ricci curvature; and
		\item[$iii)$] the flow converges smoothly to a metric $g_{\infty}$ on $S^3$ with constant  sectional curvature.
	\end{itemize}
\end{prop}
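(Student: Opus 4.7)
The plan is to adapt Chow's arguments from \cite{Cho}. Since the Yamabe flow \eqref{eq-normalised-YF} preserves the conformal class, I would first reduce to a scalar problem by writing $g(t) = u(t)^{4} g_0$ with $u(\cdot,0) \equiv 1$; in dimension three this turns \eqref{eq-normalised-YF} into a quasilinear strictly parabolic equation of second order for the positive function $u$. Standard parabolic theory then provides short-time existence, uniqueness and smoothness of the flow, and the scale-invariance of $E$ keeps the volume constant throughout.

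For property $ii)$, I would apply Hamilton's tensor maximum principle to the evolution of the Ricci tensor along \eqref{eq-normalised-YF}. The essential simplification in dimension three is that the Weyl tensor vanishes identically, so the full Riemann curvature is algebraically determined by $\text{Ric}$ and the evolution equation for $\text{Ric}$ closes up without extra curvature terms. One then checks that the positive Ricci cone is invariant under the system of ordinary differential equations obtained by dropping the diffusion terms, which by Hamilton's maximum principle is enough to guarantee $\text{Ric}(g(t)) > 0$ for as long as the flow is defined. A pinching inequality of the form $\text{Ric}(g(t)) \geq \varepsilon R_{g(t)}\, g(t)$, with $\varepsilon > 0$ depending only on $g_0$, is preserved by the same argument and will be essential for the convergence step.

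For $i)$ and $iii)$, the scalar curvature satisfies a parabolic equation of the form $\partial_t R = 2\Delta R + R(R-r)$, so combining the scalar maximum principle with the monotonicity of $E(g(t))$ produces uniform two-sided bounds on $R_{g(t)}$. Parabolic Schauder estimates, together with a Krylov-Safonov Harnack inequality applied to the equation for $u$, then upgrade these to uniform $C^{k}$ bounds on $u$, and hence to long-time existence. The monotonicity of $E$ additionally forces $\int_0^\infty \int_{S^3} (R_{g(t)} - r_{g(t)})^2 \, dV_{g(t)}\, dt < \infty$, so one can extract a smooth subsequential limit $g_\infty$ with $R_{g_\infty}$ constant; the preserved Ricci pinching then forces $g_\infty$ to be Einstein, which on $S^3$ means constant sectional curvature.

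The hardest step, as I see it, is upgrading this subsequential convergence to honest $t\to\infty$ convergence in the $C^\infty$ topology (not merely along subsequences and not merely modulo a sequence of diffeomorphisms). In \cite{Cho} this is achieved by sharper integral estimates tailored to the conformally flat setting, exploiting the special form of the evolution equation for the conformal factor $u$; alternatively, one could in principle appeal to a Lojasiewicz-Simon inequality for the Hilbert-Einstein functional $E$ near its round critical point, but that is a heavier hammer than Chow's direct argument.
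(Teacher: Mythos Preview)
The paper does not prove this proposition at all: it is stated purely as a summary of results from Chow \cite{Cho} (specifically Theorem 1.2 and Lemma 2.9 there), with no argument given. So there is nothing to compare your proposal against beyond Chow's original paper.

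That said, your outline is broadly faithful to Chow's strategy, with one conceptual slip worth flagging. In your argument for $ii)$ you write that ``the essential simplification in dimension three is that the Weyl tensor vanishes identically, so the full Riemann curvature is algebraically determined by $\text{Ric}$ and the evolution equation for $\text{Ric}$ closes up without extra curvature terms.'' This is the mechanism that makes Hamilton's \emph{Ricci flow} argument work in three dimensions, but it is not what drives Chow's Yamabe flow argument. The Weyl tensor vanishes on \emph{every} three-manifold, so that observation carries no information about the hypothesis ``conformally flat''; in dimension three, conformal flatness is equivalent to the vanishing of the Cotton tensor, i.e.\ to the Schouten tensor being Codazzi. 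Under the Yamabe flow the evolution of $\text{Ric}$ involves the Hessian of $R$ (coming from the conformal change formula), and it is precisely the Codazzi condition on the Schouten tensor that lets Chow rewrite those Hessian terms so that the tensor maximum principle applies. Without the conformal flatness hypothesis the evolution of $\text{Ric}$ under Yamabe flow does \emph{not} close up, even in dimension three. Your sketch of $i)$ and $iii)$ (scalar maximum principle, monotonicity of $E$, Harnack/Schauder bootstrapping, and the delicate upgrade from subsequential to full convergence) is an accurate summary of the remaining ingredients in \cite{Cho}.
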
  

\subsection{Proof of Theorem \ref{thm-roundsphere}} Let $g$ be a Riemannian metric on $S^3$ that is conformally flat and has positive Ricci curvature. Without loss of generality, we will assume that $(S^3,g)$ has the same volume as a unit three-sphere in the Euclidean four-dimensional space. \\
\indent Let $g(t)$, $t\geq 0$, be the Yamabe flow \eqref{eq-normalised-YF} starting at the metric $g(0)=g$, as given by Proposition \ref{prop-Yamabe}. The metrics $g(t)$ converges to a metric $g_{\infty}$ with constant sectional curvature $K$ on $S^3$. Given the chosen volume normalisation at $t=0$, the fact that the flow does not change the volume forces $K$ to be equal to one. Thus, the widths 
	\begin{equation*}
		W(t) = W(S^3,g(t)), \quad t\in [0,\infty),
	\end{equation*}
	vary continuously (see Lemma \ref{lemm-star-derivative-width}) and converge to $W(S^3,g_\infty)=4\pi$ as $t$ goes to infinity, while the average value of the scalar curvature $r_{g(t)}$ is always greater than or equal to $r_{g_{\infty}}=6$.\\
	\indent Suppose, by contradiction, that $W(0) > 4\pi$. By continuity, there exists $\tau\in [0,\infty)$ such that the $W(\tau)\geq W(t)$ for all $t\in [0,\infty)$. Since all metrics $g(t)$ satisfy property $(\star)$ by virtue of having positive Ricci curvature, we can use Lemma \ref{lemm-star-necessary} at time $t=\tau$ to conclude that there exists an embedded index one minimal two-sphere $\Sigma$ in $(S^3,g(\tau))$ with area $W(\tau)$ and such that
	\begin{multline} \label{eq-derivative-width-normalised-YF}
		0\geq \frac{1}{2}\int_{\Sigma} \text{Tr}_{(\Sigma,g(\tau))}(\partial_t g(\tau))dA_{g(\tau)} \\ =  \int_{\Sigma}(r_{g(\tau)} - R_{g(\tau)})dA_{g(\tau)} = W(\tau)r_{g(\tau)} - \int_{\Sigma} R_{g(\tau)} dA_{g(\tau)}.
	\end{multline} 
	Using the Gauss equation for minimal surfaces
	$$2K = R - 2Ric(N,N) - |A|^2,$$ 
	the well-known estimate   
	\begin{equation*}
		\int_{\Sigma} Ric(N,N) + |A|^2 dA_{g(\tau)}\leq 8\pi
	\end{equation*}
	for index one minimal two-spheres (see \cite{MarNev-Duke}, Appendix A), and the Gauss-Bonnet theorem, we conclude that
	\begin{equation*}
		\int_{\Sigma} R_{g(\tau)}dA_{g(\tau)} \leq 24\pi.
	\end{equation*}
	Combining the above inequality with \eqref{eq-derivative-width-normalised-YF}, we have
	\begin{equation*}
		 W(\tau)r_{g(\tau)} \leq  24\pi
	\end{equation*}
	But then $r_{g(\tau)}\geq r_{g_{\infty}}=6$ implies $W(\tau) \leq 4\pi$. This gives a contradiction with the fact that, as an absolute maximum of $W(t)$, the point $\tau$ is such that $W(\tau) \geq W(0) > 4\pi$. \\
	\indent Therefore $W(S^3,g)\leq 4\pi$, as claimed. In the case of equality, we consider again the Yamabe flow $g(t)$ starting at $g(0)=g$. Since all metrics $g(t)$ have the same volume, are conformally flat and have positive Ricci curvature, the previous analysis yields $W(S^3,g(t)) \leq 4\pi$ for all $t\geq 0$. In particular, $t=0$ is a point of maximum of the function $W(t)$. The same argument above allows us to conclude that $W(S^3,g)r_{g} \leq 24\pi$. But the flow started at $g$ with $W(S^3,g)=4\pi$, and $r_{g} \geq r_{\infty}=6$. These three inequalities imply that the metrics  $g$ and $g_{\infty}$ have the same average scalar curvature. As they also have the same volume, Obata's Theorem \cite{Oba} now finishes the proof, for we have shown that $g$ attains the minimum possible value of the Yamabe functional in the conformal class $[g_{\infty}]$ and therefore must have constant sectional curvature.

\section{The width of homogeneous three-spheres} \label{Sec-4}

\subsection{A general, sharp estimate} Every homogeneous $(S^3,g)$ can be identified with the Lie group $SU(2)$ endowed with a left-invariant metric $g$ (\textit{cf}. Theorem 2.4 in \cite{MeePer}, for example). The right-invariant vector fields form a three-dimensional vector subspace of Killing fields of $(SU(2),g)$, because their flow is by left translations. These vector fields either vanish identically, or vanish nowhere. Using these observations, one can prove the following

\begin{lemm} \label{lemm-no-stable}
	A homogeneous Riemannian three-sphere contains no embedded stable minimal two-spheres.
\end{lemm}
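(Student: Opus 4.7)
The plan is to exploit the interplay between Killing fields and the Jacobi operator on a minimal surface. Identify $(S^3,g)$ with $(SU(2),g)$ carrying a left-invariant metric. As noted in the excerpt, the right-invariant fields $X_1,X_2,X_3$ (a basis of the Lie algebra viewed as right-invariant vector fields) are Killing fields that vanish nowhere and span $T_pS^3$ at every $p\in S^3$. The argument proceeds by contradiction: assume $\Sigma\subset (S^3,g)$ is an embedded stable minimal two-sphere, with unit normal $N$.

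For each Killing field $X$, the function $u:=\langle X,N\rangle$ on $\Sigma$ is a Jacobi field, i.e.\ $Ju=0$ for the Jacobi operator $J=\Delta_\Sigma+|A|^2+\mathrm{Ric}_g(N,N)$. Because $\Sigma$ is stable, the first eigenvalue $\lambda_1$ of $-J$ is nonnegative; because $\Sigma\cong S^2$ is connected, $\lambda_1$ is simple with a strictly positive eigenfunction. Consequently, any element of $\ker J$ is either zero (when $\lambda_1>0$) or a scalar multiple of that positive first eigenfunction (when $\lambda_1=0$). In either case, every Jacobi field $u=\langle X,N\rangle$ on $\Sigma$ is either identically zero or nowhere zero (of strict sign).

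If $u_i := \langle X_i,N\rangle \equiv 0$ for all three Killing fields $X_i$, then each $X_i$ is tangent to $\Sigma$ at every point of $\Sigma$, contradicting the fact that $\{X_1(p),X_2(p),X_3(p)\}$ spans the three-dimensional space $T_pS^3$ while $T_p\Sigma$ has dimension two. So, after relabeling and perhaps reversing sign, we may assume that $X=X_i$ satisfies $u:=\langle X,N\rangle>0$ everywhere on $\Sigma$. Write $S^3\setminus\Sigma=\Omega\sqcup\Omega'$ where $X$ points into $\Omega$, and let $\{\phi_t\}$ denote the isometric flow of $X$.

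The conclusion follows from a volume-trapping argument. Since $u>0$ on $\Sigma$ and $\Sigma$ is compact, for all sufficiently small $t>0$ the minimal two-sphere $\phi_t(\Sigma)$ lies in $\Omega$, and by Jordan--Brouwer it bounds an inner ball $B_t\subsetneq\Omega$ whose complement in $S^3$ is the other component of $S^3\setminus\phi_t(\Sigma)$. Since $\phi_t$ is a homeomorphism and $\Omega$ is connected, $\phi_t(\Omega)$ is a connected open set with boundary $\phi_t(\Sigma)$, hence coincides with $B_t$ or with its complement. Picking a basepoint $p_0\in\Omega$ at positive distance from $\Sigma$, continuity of the flow forces $\phi_t(p_0)\in B_t$ for small $t$, so $\phi_t(\Omega)=B_t$. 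But $\phi_t$ is an isometry, giving $\mathrm{vol}(\Omega)=\mathrm{vol}(\phi_t(\Omega))=\mathrm{vol}(B_t)<\mathrm{vol}(\Omega)$, a contradiction. The main obstacle is precisely this last topological step; the eigenvalue and spanning arguments are direct.
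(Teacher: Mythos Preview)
Your argument is correct, but it takes a longer route than the paper's. Both proofs start from the same two ingredients: the right-invariant fields on $SU(2)$ are nowhere-vanishing Killing fields, and on a stable minimal surface the kernel of the Jacobi operator is at most one-dimensional (by simplicity of the first eigenvalue). The paper then finishes in one line: the linear map $X\mapsto g(X,N)$ from the three-dimensional space of right-invariant fields into the Jacobi functions has trivial kernel, because a nonzero $X$ with $g(X,N)\equiv 0$ would be a nowhere-vanishing tangent field on $\Sigma\cong S^2$, contradicting the hairy ball theorem; thus $\dim\ker J\geq 3>1$ and $\Sigma$ cannot be stable.

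You instead use the spanning property of the $X_i$ (rather than the hairy ball theorem) to find a single Jacobi field of strict sign, and then run a flow-and-volume-trapping argument to reach a contradiction. This works, but the topological step you flag as ``the main obstacle'' is genuinely the most delicate part and could be replaced by something much simpler: once $u=\langle X,N\rangle>0$ on $\Sigma$, the fact that Killing fields are divergence-free together with the divergence theorem on $\Omega$ gives $\int_\Sigma \langle X,N\rangle\,dA_g=0$, an immediate contradiction. So your approach trades the hairy ball theorem for either a somewhat fiddly Jordan--Brouwer/isotopy argument or (more cleanly) the divergence theorem, whereas the paper's dimension count avoids any such endgame entirely.
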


\begin{proof}
	(\textit{Cf}. \cite{MeeMirPerRos}, Section 4). Given a minimal two-sphere $\Sigma$ in $(S^3,g)$, oriented by the unit normal vector field $N$, the map that assigns to each right-invariant vector field $X$ on $S^3$ the Jacobi function $g(X,N)$ on $\Sigma$ is linear and has trivial kernel, because otherwise the two-sphere $\Sigma$ would admit a nowhere vanishing tangent vector field. In particular, $\Sigma$ admits at least three linearly independent Jacobi function. Hence, $\Sigma$ cannot be stable. 
\end{proof}

\begin{thm} \label{thm-homogeneous-scalar-curvature}
	If $g$ is a homogeneous Riemannian metric on the three-sphere with constant scalar curvature 6, then
		\begin{equation*}
			W(S^3,g) \leq 4\pi,		
		\end{equation*}		 	
		and equality holds if and only if $(S^3,g)$ has constant sectional curvature one.
\end{thm}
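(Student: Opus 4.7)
The plan splits into two stages, matching the inequality $W(S^3,g)\leq 4\pi$ and the rigidity in the equality case.

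For the inequality, I would bypass any flow and extract the result from a direct calculation on an optimal sweep-out. Since $(S^3,g)$ is homogeneous, Lemma 4.1.1 implies that it contains no embedded stable minimal two-spheres, so $g$ satisfies property $(\star)$, and Lemma 2.1.2 produces an embedded index one minimal two-sphere $\Sigma$ in $(S^3,g)$ with area equal to $W:=W(S^3,g)$. Integrating the Gauss equation $2K_\Sigma = R_g - 2\,\text{Ric}_g(N,N) - |A|^2$ on $\Sigma$ and using Gauss--Bonnet together with $R_g\equiv 6$ gives
\begin{equation*}
\int_\Sigma \bigl(2\,\text{Ric}_g(N,N) + |A|^2\bigr)\,dA \;=\; 6W - 8\pi.
\end{equation*}
The index one estimate $\int_\Sigma(|A|^2 + \text{Ric}_g(N,N))\,dA \leq 8\pi$ (recalled in the proof of Theorem 1.2.1) yields both $\int_\Sigma \text{Ric}_g(N,N)\,dA \leq 8\pi$ (dropping $|A|^2\geq 0$) and, subtracted from the previous display, $\int_\Sigma \text{Ric}_g(N,N)\,dA \geq 6W - 16\pi$. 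These two bounds on the same integral force $6W\leq 24\pi$, i.e.\ $W\leq 4\pi$.

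For the rigidity I would apply Lemma 2.1.4 to a scalar-curvature-preserving rescaling of the Ricci flow. Let $g(t)$ be the unnormalised Ricci flow with $g(0)=g$, which preserves the isometry group, hence homogeneity and property $(\star)$; for a homogeneous three-sphere the constant scalar curvature evolves by $\partial_t R = 2|\text{Ric}|^2$. I would then set $\tilde g(t)=(R_{g(t)}/6)\,g(t)$, a smooth one-parameter family of homogeneous metrics of constant scalar curvature $6$ with $\tilde g(0)=g$. The inequality just established, applied at each $t$, yields $W(\tilde g(t))\leq 4\pi = W(\tilde g(0))$, so Lemma 2.1.4 applies at $t=0$ and produces an index one minimal two-sphere $\Sigma$ in $(S^3,g)$ with area $4\pi$ and $\int_\Sigma \text{Tr}_{(\Sigma,g)}(\partial_t\tilde g(0))\,dA \leq 0$. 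A short calculation gives $\partial_t\tilde g(0) = \tfrac{1}{3}|\text{Ric}_g|^2\,g - 2\,\text{Ric}_g$; combined with the equality data $\int_\Sigma \text{Ric}_g(N,N)\,dA = 8\pi$ (forced by $W=4\pi$ in the previous step), the inequality from Lemma 2.1.4 should simplify to $|\text{Ric}_g|^2 \leq R_g^2/3$. Together with the reverse Cauchy--Schwarz inequality $|\text{Ric}_g|^2 \geq R_g^2/3$ (equality iff Einstein), this forces $g$ to be Einstein; since in dimension three Einstein is equivalent to constant sectional curvature, $g$ is the round unit three-sphere.

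The main obstacle I anticipate is the rigidity step, where the arithmetic in the application of Lemma 2.1.4 must conspire to give exactly the sharp Cauchy--Schwarz inequality $|\text{Ric}_g|^2 \leq R_g^2/3$ without residual curvature terms; verifying that the rescaling $\tilde g(t)$ is genuinely smooth in the sense needed by Lemma 2.1.4 is a further bookkeeping item. The inequality step itself is essentially a three-line comparison of integrals and should be routine.
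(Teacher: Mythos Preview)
Your proposal is correct and takes a genuinely different route from the paper. The paper simply invokes the Ricci flow argument of Marques--Neves \cite{MarNev-Duke}: the flow preserves homogeneity, Lemma~4.1.1 gives condition $(\star)_0$ throughout, and then Proposition~4.2 and Theorem~4.4 of \cite{MarNev-Duke} yield both the inequality and the rigidity in one stroke, via the differential inequality for $W(t)R(t)$ along the flow.

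Your inequality proof is more elementary and entirely static: it uses only Lemma~2.1.2, Gauss--Bonnet, and the Hersch-type bound $\int_\Sigma(\mathrm{Ric}(N,N)+|A|^2)\,dA\le 8\pi$, with no flow at all. This is cleaner and makes the role of the scalar curvature normalisation completely explicit. Your rigidity argument is also different in spirit: rather than tracking the flow to its round limit, you manufacture a one-parameter test family $\tilde g(t)$ of scalar-curvature-$6$ homogeneous metrics and feed it into Lemma~2.1.4, reducing rigidity to the algebraic identity $|\mathrm{Ric}|^2=R^2/3$. The arithmetic does work out exactly as you hope (with $W=4\pi$ and $\int_\Sigma\mathrm{Ric}(N,N)\,dA=8\pi$ the trace integral equals $\tfrac{8\pi}{3}|\mathrm{Ric}|^2-32\pi$), and the smoothness of $\tilde g(t)$ near $t=0$ is immediate since $R_{g(t)}$ is a smooth positive scalar. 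One small point worth making explicit: the sphere $\Sigma$ produced by Lemma~2.1.4 need not be the same as the one from Lemma~2.1.2, but the equality data $\int_\Sigma\mathrm{Ric}(N,N)\,dA=8\pi$ (and incidentally $|A|\equiv 0$) follow for \emph{any} index-one minimal two-sphere of area $4\pi$, by the same chain of inequalities. The paper's approach has the virtue of being a direct citation; yours is more self-contained and extracts finer information (totally geodesic $\Sigma$, Einstein condition) along the way.
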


\begin{proof}
	By uniqueness of the Ricci flow given the initial condition, and the fact that the flow equation is invariant under isometries, the maximally extended Ricci flow $g(t)$, $t\in [0,T)$, starting at $g(0)=g$ is a flow by homogeneous metrics. Lemma \ref{lemm-no-stable} shows that any homogeneous $(S^3,g)$ satisfies the condition $(\star)_0$ of Marques and Neves \cite{MarNev-Duke}. Hence, Proposition 4.2 in \cite{MarNev-Duke} applies for the Ricci flow $g(t)$ and controls how slowly the width can possibly converge to zero. On the other hand, since $g(0)$ has positive scalar curvature, the maximum principle controls how fast the (constant) scalar curvature of $g(t)$ can possibly grow up to infinity, and the result follows by precisely the same argument presented in the proof of Theorem 4.4 in \cite{MarNev-Duke}. 
\end{proof}

\indent Given the identification of homogeneous metrics on $S^3$ with left-invariant metrics on $SU(2)$, one can show that these metrics are parametrised by three positive real numbers (see \cite{Mil}, Section 4). The above theorem provides an upper bound for the normalised width of a homogeneous three-sphere of the form $24\pi/RV^{2/3}$, where $R$ denotes its scalar curvature and $V$ denotes its volume. Since both quantities can be explicitly computed as a function of the parameters (\textit{e.g.} in \cite{Mil}, page 306), we obtained an upper bound for the normalised width that is explicitly computable in terms of these parameters.

\subsection{The width of Berger spheres}

\indent The Berger metric $g_{\rho}$, $\rho>0$, is the homogeneous metric on $S^3$ obtained by deforming the standard round metric so that the norm of the vector field generating the standard Hopf action becomes $\rho$, and nothing else changes. Notice that $g_1$ is the round metric. The geometry of $(S^3,g_{\rho})$ is very well understood. For example, its Ricci curvature is positive if and only if $0<\rho<\sqrt{2}$, and its scalar curvature is given by $8-2\rho^2$. Furthermore, its minimal two-spheres have been classified by Abresch and Rosenberg \cite{AbrRos}: up to ambient isometries, there exists only one minimal two-sphere in $(S^3,g_{\rho})$, which is embedded and has index one (see \cite{TorUrb} for more details). Thus, the area of this unique minimal two-sphere is equal to the width of $(S^3,g_{\rho})$.  \\
\indent The classification result allows us to compute the widths of the Berger metrics rather explicitly (at least numerically). The following formula was computed in \cite{AmbMon}, Section 5: for all $\rho > 0$, 
\begin{equation*}
	\frac{W(S^3,g_{\rho})}{vol(S^3,g_{\rho})^{\frac{2}{3}}} = \sqrt[\leftroot{-1}\uproot{2}\scriptstyle 3]{\frac{2}{\pi}}\int_{0}^{\pi}\sin(s)\sqrt{(1-\sin^{2}(s))\rho^{-\frac{4}{3}} + \sin^{2}(s)\rho^{\frac{2}{3}}}ds.
\end{equation*} 
\noindent It is now a straightforward exercise to reach the conclusions stated in Proposition \ref{thm-bergermetrics}.

\subsection{The integral-geometric formula} Extending Abresch and Rosenberg classification of minimal two-spheres in Berger spheres \cite{AbrRos}, Meeks, Mira, P\'erez and Ros \cite{MeeMirPerRos} classified minimal two-spheres in any homogeneous $(S^3,g)$. In particular, they showed that the minimal two-spheres in these spaces are unique up to ambient isometries. Moreover, they are all embedded, have index one, and their area is equal to $W(S^3,g)$. \\
\indent In \cite{AmbMon}, we proved the following integral-geometric formula for all homogeneous Riemannian metrics $g$ on the three-sphere:
\begin{equation} \label{eq-homogeneous-equidistribution}
		\int_{\mathcal{G}^{+}} \left(\frac{1}{W(S^3,g)}\int_{\Sigma} f dA_{g} \right) d\mathcal{G}^{+}_{g} = \int_{S^3} f dV_{g} \quad \text{for all} \quad f\in C^{0}(S^3).
\end{equation}
Here, $\mathcal{G}^{+}$ denotes the set of all minimal two-spheres in $(S^3,g)$, which can be shown to be in bijection with $S^3$ itself thanks of the classification theorem. We refer the reader to \cite{AmbMon}, Theorems 2.1 and 3.1, for a more detailed explanation of the meaning of formula \eqref{eq-homogeneous-equidistribution}, and its proof. \\
\indent Transforming the integral over $\mathcal{G}^{+}$ in \eqref{eq-homogeneous-equidistribution} into a limit of Riemann sums, it is immediate to see that the volume element of a homogeneous three-sphere and the Radon measures associated to each element of the compact set $\mathcal{G}^+$ satisfy one of the equivalent conditions expressed in Theorem \ref{thm-abstract-equidistribution}, namely condition $iii)$. In view of condition $iv)$ in the same Theorem, we have thus verified that the conclusion of Proposition \ref{prop-star-equidistribution} holds for every homogeneous three-spheres, as claimed.

\section{Locally maximising metrics} \label{Sec-5}

\subsection{On equidistribution: first proof} We prove Proposition \ref{prop-necessary-equidistribution} under the extra assumption that the locally maximising metric satisfies property $(\star)$. Before starting the proof, we make a simple observation: 
\begin{lemm} \label{lemm-constant-volume-variations}
	Let $g$ be a Riemannian metric on $S^3$. For every smooth function $f$ on $S^3$ with zero average, there exists a smooth family of Riemannian metrics $g(t)$, $t\in[0,\epsilon)$, such that
	\begin{itemize}
		\item[$i)$] $g(0)=g$ and $\partial_t g(0)=fg$;
		\item[$ii)$] all metrics $g(t)$ belong to the same conformal class of $g$; and
		\item[$iii)$] $vol(S^3, g(t))=vol(S^3,g)$ for every $t\in [0,\epsilon)$.
	\end{itemize}	 
\end{lemm}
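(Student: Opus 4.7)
The plan is to seek $g(t)$ of the form $g(t) = \lambda(t)\, e^{tf}\, g$, where $\lambda(t)$ is a positive scalar function of $t$ alone, chosen so that condition (iii) holds exactly. This ansatz makes (ii) automatic, since each $g(t)$ is pointwise a positive multiple of $g$. The only things left to verify are (i) at $t=0$ and the existence of a smooth $\lambda$ that fixes the volume.

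Concretely, I would first note that without the correction factor $\lambda$, the family $e^{tf} g$ has $\partial_t(e^{tf}g)|_{t=0} = fg$, as desired for (i). In dimension three, the volume element satisfies $dV_{e^{tf}g} = e^{3tf/2}\, dV_g$, so
\begin{equation*}
\mathrm{vol}(S^3, e^{tf}g) = \int_{S^3} e^{3tf/2}\, dV_g,
\end{equation*}
which at $t=0$ equals $V := \mathrm{vol}(S^3,g)$ and has $t$-derivative $(3/2)\int_{S^3} f\, dV_g = 0$ by the zero-average hypothesis. Thus the volume is preserved only to first order, not exactly, motivating the introduction of $\lambda(t)$.

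Next, I would define
\begin{equation*}
\lambda(t) = \left(\frac{V}{\int_{S^3} e^{3tf/2}\, dV_g}\right)^{2/3},
\end{equation*}
which is smooth and positive in a neighbourhood of $t=0$, with $\lambda(0)=1$. By construction $\mathrm{vol}(S^3, \lambda(t) e^{tf} g) = \lambda(t)^{3/2} \int_{S^3} e^{3tf/2}\, dV_g = V$, giving (iii). Differentiating the relation $\lambda(t)^{3/2}\int e^{3tf/2} dV_g = V$ at $t=0$ and using the zero-average condition a second time yields $\lambda'(0) = 0$. Consequently
\begin{equation*}
\partial_t g(0) = \lambda'(0)\, g + \lambda(0) f\, g = f g,
\end{equation*}
which is (i). The metric $g(t) = \lambda(t) e^{tf} g$ therefore satisfies all three properties on some interval $[0,\epsilon)$.

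There is no real obstacle here: the construction is elementary, and the only nontrivial input is precisely the zero-average assumption on $f$, which is used exactly once, to guarantee $\lambda'(0)=0$ so that the first-order variation is unaffected by the volume-correcting scalar. A minor point to be careful with is smoothness and positivity of $\lambda$, which follow because $\int_{S^3} e^{3tf/2} dV_g$ depends smoothly on $t$ and equals $V>0$ at $t=0$, so $\epsilon>0$ can be chosen small enough that the denominator stays positive and smooth.
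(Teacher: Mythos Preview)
Your proof is correct and follows essentially the same approach as the paper: both take a one-parameter conformal family with the right first-order variation (you use $e^{tf}g$, the paper uses $(1+tf)g$) and then multiply by a scalar function of $t$ to normalise the volume, observing that the zero-average hypothesis forces this scalar to have vanishing derivative at $t=0$. The only difference is cosmetic---your exponential factor is automatically positive for all $t$, while the paper's linear factor requires $\epsilon$ small enough that $1+tf>0$.
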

\begin{proof}	
	For $\epsilon>0$ sufficiently small, consider the smooth path starting at the metric $g$ defined by
	\begin{equation*}
		g(t)=\frac{vol(S^3,g)^{\frac{2}{3}}(1+ft)}{vol(S^{3},(1+ft)g)^{\frac{2}{3}}}g \quad \text{for all} \quad t \in [0,\epsilon).
	\end{equation*}
 	If $\epsilon$ is sufficiently small, each $g(t)$ is a Riemannian metric in the conformal class of $g$ and with the same volume as $g$. Moreover, $g(0)=g$ and
 	\begin{equation*}
 		\frac{d}{dt}\Bigr|_{t=0} vol(S^3,(1+tf)g) = \frac{3}{2}\int_{S^3}fdV_g =0 \quad \Rightarrow \quad \partial_t g(0) = f g.
 	\end{equation*}
\end{proof}
	
\begin{prop} \label{prop-star-equidistribution}
	Let $g$ be a Riemannian metric on the three-sphere that is a local maximum of the normalised width inside its conformal class. Assume that $(S^3,g)$ satisfies property $(\star)$. Then there exists a sequence $\{\Sigma_{i}\}$ of embedded minimal two-spheres in $(S^3,g)$, with Morse index equal to one and area equal to $W(S^3,g)$, such that
	\begin{equation*}
		\lim_{k\rightarrow +\infty} \frac{1}{k} \sum_{i=1}^{k}\int_{\Sigma_{i}} fdA_g = \frac{W(S^3,g)}{vol(S^3,g)}\int_{S^3} f dV_{g} \quad \text{for all}\quad f\in C^{0}(S^3).
\end{equation*}	 
\end{prop}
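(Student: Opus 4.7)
The plan is to combine the variational criterion of Lemma \ref{lemm-star-necessary} with a Hahn–Banach separation argument, and then feed the conclusion into the abstract equidistribution result Theorem \ref{thm-abstract-equidistribution} from the Appendix. The hypothesis $(\star)$ enters in two places: through Lemma \ref{lemm-openness}, so that $(\star)$ persists along short conformal paths starting at $g$, and to exclude stable limits when compactifying the set of candidate minimal two-spheres. Normalise the metric so that $vol(S^3,g)=1$ throughout.

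The key intermediate assertion is: for every smooth $f$ on $S^3$ with $\int_{S^3} f\,dV_g=0$, there exists an embedded index-one minimal two-sphere $\Sigma_f\subset(S^3,g)$, of area $W(S^3,g)$, with $\int_{\Sigma_f}f\,dA_g\leq 0$. To establish it, I would use Lemma \ref{lemm-constant-volume-variations} to construct a smooth volume-preserving conformal path $g(t)\in[g]$ with $g(0)=g$ and $\partial_tg(0)=fg$. Local maximality of the normalised width in $[g]$, combined with constancy of the volume, forces $W(S^3,g(t))\leq W(S^3,g(0))$ for $t\in[0,\epsilon)$, while Lemma \ref{lemm-openness} lets me shrink $\epsilon$ to ensure $(\star)$ holds along the whole path. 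Lemma \ref{lemm-star-necessary} then delivers the desired $\Sigma_f$, and the claimed inequality follows from $\text{Tr}_{(\Sigma_f,g)}(fg)=2f$ on $\Sigma_f$.

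Let $\mathcal{G}^+$ denote the set of embedded index-one minimal two-spheres in $(S^3,g)$ of area $W(S^3,g)$, and associate to each $\Sigma\in\mathcal{G}^+$ the probability measure $\mu_\Sigma:=W(S^3,g)^{-1}dA_\Sigma$ on $S^3$. By the Compactness Theorem \ref{thm-compactness} together with $(\star)$, $\mathcal{G}^+$ is sequentially compact in the smooth topology, so $\{\mu_\Sigma\}$ is a weak-$\ast$ compact subset of $\mathcal{P}(S^3)$ and its closed convex hull $K$ is compact and convex. I claim $\nu:=dV_g$ lies in $K$. If not, Hahn–Banach strictly separates $\nu$ from $K$: there exists $h\in C^0(S^3)$ with $\sup_{\Sigma\in\mathcal{G}^+}\int h\,d\mu_\Sigma<\int h\,d\nu$. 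Approximating $h$ by a smooth function in the sup norm preserves this strict inequality thanks to compactness of $\mathcal{G}^+$, and subtracting its $\nu$-average yields a smooth $f$ with $\int_{S^3}f\,dV_g=0$ and $\int f\,d\mu_\Sigma<0$ for every $\Sigma\in\mathcal{G}^+$. Applying the intermediate assertion to $-f$, however, produces $\Sigma_{-f}\in\mathcal{G}^+$ with $\int_{\Sigma_{-f}}f\,dA_g\geq 0$, contradicting the previous inequality.

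Having placed $\nu$ in the closed convex hull of $\{\mu_\Sigma:\Sigma\in\mathcal{G}^+\}$, I invoke Theorem \ref{thm-abstract-equidistribution} (the implication between the two conditions explicitly used at the end of Section \ref{Sec-4}): it furnishes a sequence $\Sigma_i\in\mathcal{G}^+$ whose empirical means $\frac{1}{k}\sum_{i=1}^k\mu_{\Sigma_i}$ converge to $\nu$ in the weak-$\ast$ topology, and unwinding the normalisations gives the stated formula for every $f\in C^0(S^3)$. The step I expect to be most delicate is the separation argument: one must confirm that strict separation survives the passage from continuous to smooth test functions, which relies crucially on the weak-$\ast$ compactness of $\{\mu_\Sigma\}$ and hence on the Compactness Theorem combined with property $(\star)$; once that is in place, the remainder is essentially bookkeeping.
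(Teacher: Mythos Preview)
Your proposal is correct and follows essentially the same route as the paper: establish via Lemmas \ref{lemm-constant-volume-variations}, \ref{lemm-openness}, and \ref{lemm-star-necessary} that every zero-average test function is nonpositive on some $\Sigma\in\mathcal{G}^+$, use the Compactness Theorem \ref{thm-compactness} together with $(\star)$ to guarantee that $\mathcal{G}^+$ yields a weak-$*$ compact family of measures, and conclude via Theorem \ref{thm-abstract-equidistribution}. The only organisational difference is that you carry out the Hahn--Banach separation by hand (handling the smooth-versus-continuous approximation inside that argument), whereas the paper first extends the intermediate assertion to continuous $f$ by approximation and compactness and then invokes Theorem \ref{thm-abstract-equidistribution} directly through condition $ii)$; since the implication $ii)\Rightarrow iii)$ in that theorem is precisely the separation step you reproduce, the two arguments are the same in substance.
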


\begin{proof}
	Given an arbitrary smooth function $f$ on $(S^3,g)$ with zero average, let $g(t)$, $t\in [0,\epsilon)$, be a smooth family of metrics starting at $g(0)=g$ as described in Lemma \ref{lemm-constant-volume-variations}. Taking a smaller $\epsilon>0$, if necessary, we may assume that all metrics $g(t)$ satisfy property $(\star)$ as well, by Lemma \ref{lemm-openness}. Since all metrics of the family have the same volume as $g$ and lie in the same conformal class, the assumption on $g$ allows us to apply Lemma \ref{lemm-star-necessary}: there exists an embedded index one minimal two-sphere $\Sigma$ such that 
	\begin{equation*}
		area(\Sigma,g)=W(S^3,g) \quad \text{and} \quad \int_{\Sigma} f dA_g = \frac{1}{2}\int_{\Sigma} tr_{(\Sigma,g)}(\partial_tg(0))dA_g \leq 0.	
	\end{equation*}	 
\indent Approximating continuous functions by smooth functions in the $C^0$ norm, and using again the Compactness Theorem \ref{thm-compactness}, it is straightforward to check that the last assertion holds not only for smooth, but all continuous functions $f$ on $S^3$ with zero average. \\
\indent Thus, we can apply the abstract Theorem \ref{thm-abstract-equidistribution} in the Appendix, considering the volume element $dV_g$ as a Radon measure on $S^3$ and $\mathcal{Y}$ as the non-empty set of Radon measures originating from integration over an embedded index one minimal two-sphere in $(S^3,g)$ that has area equal to $W(S^3,g)$. The result follows: see items $ii)$ and $iv)$ of Theorem \ref{thm-abstract-equidistribution}.	
\end{proof}

\subsection{On equidistribution: second proof} The next lemma, which should be compared to Lemma \ref{lemm-star-necessary}, is the key for the proof of Proposition \ref{prop-necessary-equidistribution}.

\begin{lemm} \label{lemm-main-general}
Let $g$ be a Riemannian metric on $S^3$ that is a local maximum of the normalised width in its conformal class. For every continuous function $f$ on $(S^3,g)$ satisfying
\begin{equation*}
	\int_{S^3} fdV_g < 0,
\end{equation*} 
there exists positive integers $n_1,\ldots, n_N$, and disjoint embedded minimal two-spheres $\Sigma_{1},\ldots, \Sigma_{N}$ in $(S^3,g)$ such that
\begin{equation*}
W(S^3,g) = \sum_{j=1}^{N} n_j\cdot area(\Sigma_j,g),  \quad \sum_{j=1}^{N} ind_{g}(\Sigma_j) \leq 1
\end{equation*}
\begin{equation*}
\text{and} \quad \sum_{j=1}^N n_j\int_{\Sigma_j} f dA_{g} \leq 0.
\end{equation*}
\end{lemm}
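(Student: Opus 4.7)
The strategy parallels Lemma \ref{lemm-star-necessary}, but since property $(\star)$ is unavailable one cannot differentiate the width directly at $t = 0$; instead, we replace the optimal-sweep-out argument by the generic-perturbation scheme of Lemma \ref{lemm-approximate-derivative-width}, at the cost of having to quantitatively control the error. We prove the statement first for smooth $f$ and then extend by $C^0$-approximation.

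Set $c = vol(S^3,g)^{-1}\int_{S^3} f\, dV_g < 0$ and $\hat f = f - c$, a smooth function of zero average. (If $\hat f \equiv 0$, any Simon--Smith configuration satisfies the conclusion, since $\sum_j n_j \int_{\Sigma_j} f\, dA_g = c\, W(S^3, g) < 0$, so assume $\hat f \not\equiv 0$.) By Lemma \ref{lemm-constant-volume-variations}, there is a smooth volume-preserving conformal path $g(t)$, $t \in [0,\epsilon)$, with $g(0) = g$ and $\partial_t g(0) = \hat f g$. Local maximality of the normalised width in $[g]$ combined with volume constancy gives $W(S^3, g(t)) \le W(S^3, g)$ for small $t \ge 0$. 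Reparametrising to $[0,1]$ and applying Lemma \ref{lemm-approximate-derivative-width}, we obtain, for every integer $m \ge 1$, a smooth embedding $h_m : [0,1] \to \Gamma_q$ lying within $1/m$ of $g$ in the smooth topology, together with a full-measure set $J_m \subset [0,1]$ of differentiability points of $W \circ h_m$ equipped with the minimal-surface derivative formula.

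The key step is to produce $\tau_m \in J_m$ with $\tau_m \to 0^+$ and $(W \circ h_m)'(\tau_m) \to 0^-$. Since $W$ is Lipschitz (cf.\ the proof of Lemma \ref{lemm-star-derivative-width}) and $h_m \to g$ uniformly, we may pick $\eta_m \to 0$ with $\sup_t |W(h_m(t)) - W(g(t))| \le \eta_m$; combined with $W(g(t)) \le W(g(0))$, this yields $W(h_m(t)) - W(h_m(0)) \le 2\eta_m$ for all $t \in [0,1]$. Choosing $t_m \to 0^+$ with $\eta_m / t_m \to 0$ (e.g.\ $t_m = \sqrt{\eta_m}$), averaging and intersecting with the full-measure set $J_m$ produces $\tau_m \in J_m \cap (0, t_m)$ with $(W \circ h_m)'(\tau_m) \le 2\eta_m / t_m \to 0$. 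At each $\tau_m$, Lemma \ref{lemm-approximate-derivative-width} furnishes disjoint embedded minimal two-spheres $\Sigma(m, j)$ in $(S^3, h_m(\tau_m))$ with positive integer multiplicities $n(m, j)$, total weighted area $W(S^3, h_m(\tau_m))$, total index at most $1$, and
$$(W \circ h_m)'(\tau_m) = \tfrac{1}{2}\sum_j n(m,j) \int_{\Sigma(m,j)} \text{Tr}_{(\Sigma(m,j), h_m(\tau_m))}\bigl(\partial_t h_m(\tau_m)\bigr)\, dA_{h_m(\tau_m)}.$$

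The monotonicity formula and the convergence $W(h_m(\tau_m)) \to W(S^3, g)$ give uniform area bounds, hence uniform bounds on $N_m$ and $n(m,j)$; after extracting a subsequence, $N_m = N$ and $n(m,j) = n_j$. Since $h_m(\tau_m) \to g$ and $\partial_t h_m(\tau_m) \to \hat f g$ smoothly, Compactness Theorem \ref{thm-compactness}, applied as at the end of the proof of Lemma \ref{lemm-approximate-derivative-width} (absorbing any convergence multiplicity into $n_j$ and merging coinciding limits via the maximum principle), yields disjoint embedded minimal two-spheres $\Sigma_j \subset (S^3, g)$, $j = 1, \ldots, N$, with positive integer multiplicities still written $n_j$, satisfying $W(S^3, g) = \sum_j n_j\, area(\Sigma_j, g)$ and $\sum_j ind_g(\Sigma_j) \le 1$. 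Using $\text{Tr}_{(\Sigma_j, g)}(\hat f g) = 2\hat f$ and passing to the limit in the derivative formula,
$$0 \ge \sum_j n_j \int_{\Sigma_j} \hat f\, dA_g = \sum_j n_j \int_{\Sigma_j} f\, dA_g - c\, W(S^3, g),$$
so $\sum_j n_j \int_{\Sigma_j} f\, dA_g \le c\, W(S^3, g) < 0$. The continuous case follows by approximating $f$ by smooth $f_k \to f$ in $C^0$ with $\int f_k\, dV_g < 0$ and invoking Compactness Theorem \ref{thm-compactness} once more on the resulting configurations. The principal obstacle is the derivative-extraction step, where one must quantitatively balance the approximation error $\eta_m$ against the interval length $t_m$, since the perturbed path no longer satisfies $W(h_m(t)) \le W(h_m(0))$ on the nose.
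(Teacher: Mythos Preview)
Your argument is correct, but it takes a more laborious route than the paper's. The essential difference is in the choice of conformal path. You first subtract the average, setting $\hat f = f - c$ with $c<0$, and use the volume-preserving path of Lemma \ref{lemm-constant-volume-variations}; this yields only the non-strict inequality $W(S^3,g(t)) \leq W(S^3,g(0))$, which is not stable under the perturbation $h_m$ of Lemma \ref{lemm-approximate-derivative-width}, forcing you into the quantitative balancing act between $\eta_m$ and $t_m$ to extract $\tau_m \in J_m$ with $(W\circ h_m)'(\tau_m) \to 0^-$. The paper instead exploits the hypothesis $\int_{S^3} f\,dV_g < 0$ directly: it takes the path $g(t) = (1+\tfrac{3}{2}tf)^{2/3}g$, along which the volume is \emph{strictly} decreasing, so by local maximality of the normalised width one has the \emph{strict} inequality $W(S^3,g(\delta_i)) < W(S^3,g(0))$ for each $\delta_i = 1/i$. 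This strict inequality survives the perturbation to $h_i$, and then the Fundamental Theorem of Calculus immediately gives $\tau_i \in J_i \cap [0,\delta_i]$ with $(W\circ h_i)'(\tau_i) \leq 0$ on the nose, with no error term to manage. Your approach does buy something: you end up with the strict conclusion $\sum_j n_j \int_{\Sigma_j} f\,dA_g \leq c\,W(S^3,g) < 0$, whereas the paper only obtains $\leq 0$; but since the lemma only claims $\leq 0$, the paper's simpler path is preferable.
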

\begin{proof}
First of all, let us consider the case where $f$ is a smooth function on $S^3$ with $\int_{S^3}fdV_g < 0$. Define the path of metrics
\begin{equation*}
g(t) = \left(1+\frac{3}{2}tf\right)^{\frac{2}{3}}g \quad \text{for} \quad t\in [0,T],
\end{equation*}
where $T$ is small enough to ensure that the conformal factor is positive. Then $g(0)=g$, 
\begin{equation} \label{eq-compare-volume}
vol(S^3, g(t)) = vol(S^3, g(0)) + \frac{3}{2} t\int_{S^3} fdV_{g(0)}< vol(S^3, g(0))
\end{equation} 
for all $t \in (0, T]$, and
\begin{equation*}
\partial_t g(t) = \left(1+\frac{3}{2}tf\right)^{-\frac{1}{3}} fg \quad \Rightarrow \quad \partial_t g(0) = fg.
\end{equation*}
Since $g(t)\in [g(0)]$ for all $t \in (0, T]$, the maximality hypothesis on $g=g(0)$ and \eqref{eq-compare-volume} imply that there exists $\delta\in (0,T]$ such that
\begin{equation*}
W(S^3,g(t)) \leq W(S^3,g(0)) \bigg(\frac{vol(S^3, g(t))}{vol(S^3, g(0))}\bigg)^{\frac{2}{3}}< W(S^3,g(0))
\end{equation*} 
for all $t \in (0, \delta]$. \\
\indent Fix an integer $q\geq 4$. Observe that $g : [0, \delta]\rightarrow \Gamma_q$ is a smooth embedding, where $\Gamma_q$ represents the space of Riemannian metrics of class $C^q$ on $S^3$ (possibly after decreasing $\delta$). For each large enough positive integer $i$, define $\delta_i = 1/i < \delta$ and apply Lemma \ref{lemm-approximate-derivative-width} to the restriction $g : [0, \delta_i]\rightarrow \Gamma_q$ to obtain a sequence of smooth embeddings
\begin{equation*}
h_i : [0, \delta_i] \rightarrow \Gamma_q
\end{equation*}
that are as close as we wish to $g : [0, \delta_i]\rightarrow \Gamma_q$ in the smooth topology, and subsets $J_i \subset [0, \delta_i]$ of full Lebesgue measure satisfying the properties $i)$, $ii)$ and $iii)$ listed in the statement of that Lemma, plus the further inequality
\begin{equation}\label{proof3-eq1}
W(S^3,h_i(\delta_i))< W(S^3,h_i(0)).
\end{equation}
This last inequality can be guaranteed because the widths vary continuously and $W(S^3,g(\delta_i))< W(S^3,g(0))$. \\
\indent Since the function $W(S^3,h_i(t))$, $t\in [0,\delta_i]$, is Lipschitz continuous (see the proof of Lemma \ref{lemm-star-derivative-width}), inequality \eqref{proof3-eq1} and the Fundamental Theorem of Calculus imply that there exists $\tau_i \in J_i \subset [0, \delta_i]$ such that
\begin{equation} \label{proof3-eq2}
\frac{d}{dt}\bigg|_{t=\tau_i} W(S^3,h_i(t))\leq 0.
\end{equation}
\indent For each $i$, let $\{\Sigma(i,j)\}$ and $\{n(i,j)\}$, $j=1, \ldots, N_i$, be the sequences given by item $iii)$ of the statement of Lemma \ref{lemm-approximate-derivative-width}, associated with $h_i$ and $\tau_i \in J_i$. As explained in the proof the same Lemma, possibly after passing to subsequences, we can assume that $N_i=N$ for some fixed integer $N$, and $n(i,j)=n_j$ is independent of $i$, for every $j = 1, \ldots, N$. Moreover, up to a reordering of the hypersurfaces $\Sigma(i,j)$ on the index $j$, we may say that the Morse index of $\Sigma(i,j)$ in $(S^3,h_i(\tau_i))$ equals one for $j=1$ and zero otherwise. This is possible because 
\begin{equation*}
\sum_{j=1}^N ind_{h_i(\tau_i)}(\Sigma(i,j)) \leq 1.
\end{equation*}
\indent Observe that the metrics $h_i(\tau_i)$ converge to $g=g(0)$ in the $C^q$ topology, as well as $\partial_t{h}_i(\tau_i)$ converges to $\partial_t{g}(0)=fg$ in the same topology. This follows because $h_i : [0, \delta_i] \rightarrow \Gamma_q$ can be taken arbitrarily close to $g: [0, \delta_i]\rightarrow \Gamma_q$, and the numbers $\tau_i \in [0, \delta_i]$ are decreasing to zero. \\
\indent We now apply the Compactness Theorem \ref{thm-compactness} for each sequence $\{\Sigma(i,j)\}_i$ to obtain a list $\{\Sigma_1, \ldots, \Sigma_N\}$ of connected (smooth) embedded minimal two-spheres in $(S^3, g)$ such that, up to subsequences, each $\{\Sigma(i,j)\}_i$ converges to $\Sigma_j$ in the following sense: $\Sigma(i,j)\rightarrow m_j\Sigma_j$ as varifolds for some $m_j\in \{1,2\}$, and locally graphically with $m_j$-sheets in the $C^{q-1}$ topology except possibly at a single point of $\Sigma_1$ if it happens that $m_1=2$ (notice that $m_j=1$ for all $j>1$, as all $\Sigma(i,j)$ are stable minimal two-spheres). Also, $ind_{g}(\Sigma_1)\leq 1$ and $ind_{g}(\Sigma_j)=0$ for $j\neq 1$.  Furthermore, as explained at the end of the proof of Lemma \ref{lemm-main-general}, the Maximum Principle implies that, if $\Sigma_j \cap \Sigma_k \neq \varnothing$, then the surfaces coincide.\\
\indent In summary, the minimal two-spheres $\Sigma_1,\ldots,\Sigma_N$, and their respective multiplicities $\overline{n}_1,\ldots,\overline{n}_N$, are such that sum of their Morse indices is at most one, and 
\begin{multline*}
W(S^3,g) = \lim_{i\rightarrow \infty} W(S^3,h_i(\tau_i))) 
    \\ = \lim_{i\rightarrow \infty}\sum_{j=1}^N n_j\cdot area(\Sigma(i,j), h_{i}(\tau_i)) = \sum_{j=1}^N \overline{n}_j\cdot area(\Sigma_j, g).
\end{multline*}
(Remark that, by the above discussion, $\overline{n}_{j}=m_jn_j$, where $m_j=1$ for all $j$ except possibly for $j=1$ if $\Sigma(i,1)$ converges to $\Sigma_1$ with multiplicity $m_1=2$). Moreover, by the construction of $\tau_i$, $\Sigma(i,j)$, and $n_j$, we also have
\begin{align*}
\sum_{j=1}^N \frac{n_j}{2} \int_{\Sigma(i,j)} \text{Tr}_{(\Sigma(i,j), h_i(\tau_i))}(\partial_t h_i(\tau_i)) dA_{h_i(\tau_i)} = \frac{d}{d t}\bigg|_{t=\tau_i} W(S^3,h_i(\tau_i)) \leq 0
\end{align*}
\noindent for every $i$, because of \eqref{proof3-eq2}. Since $\partial_{t}h_i(\tau_i)$ converges continuously to $\partial_t g(0)=fg$, and $\Sigma(i,j)$ converges to $\Sigma_j$ as described above, we can pass the inequalities above to the limit to obtain 
\begin{equation*}
	\sum_{j=1}^{N} \overline{n}_j\int_{\Sigma_j} f dA_{g}  = \frac{1}{2}\sum_{j=1}^{N} \overline{n}_j\int_{\Sigma_j} \text{Tr}_{(\Sigma_j, g)} (\partial_t g(0)) dA_{g} \leq 0
\end{equation*}
and conclude that the collection of two-spheres $\{\Sigma_1,\ldots,\Sigma_N\}$ and the positive numbers $\overline{n}_1,\ldots,\overline{n}_N$ have all the desired properties (to be precise, we might also need to rename the surfaces that coincide and add up their multiplicities, to guarantee disjointness of the collection $\{\Sigma_1,\ldots,\Sigma_N\}$). \\
\indent  This finishes the proof in the case of smooth functions $f$. The statement for continuous functions follows by approximating them by smooth functions in the $C^0$ norm, applying the result for smooth functions, and then using the same compactness and sequence-picking arguments as above.
\end{proof}

\begin{prop} \label{prop-equidistribution}
	If $g$ is a Riemannian metric on the three-sphere that is a local maximum of the normalised width inside its conformal class, then there exists a sequence $\{\Sigma_{i}\}$ of embedded minimal two-spheres in $(S^3,g)$, with Morse index at most one and area at most $W(S^3,g)$, such that, for every continuous function $f$ on $S^3$,
	\begin{equation} \label{eq-particular-equidistribution}
			\lim_{k\rightarrow +\infty} \frac{1}{\sum_{i=1}^{k}area(\Sigma_i,g)}\sum_{i=1}^{k} \int_{\Sigma_{i}} fdA_g = \frac{1}{vol(S^3,g)}\int_{S^3} f dV_{g}.
\end{equation}	 
\end{prop}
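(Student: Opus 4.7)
The plan is to mimic the strategy of Proposition \ref{prop-star-equidistribution}, replacing the use of Lemma \ref{lemm-star-necessary} (which required condition $(\star)$) with the more general Lemma \ref{lemm-main-general}. Outside the $(\star)$ regime the width $W(S^3,g)$ need not be realised by a single index-one minimal two-sphere, but the Simon-Smith construction still produces finite collections $\{(\Sigma_j,n_j)\}_{j=1}^N$ of disjoint embedded minimal two-spheres with positive integer weights whose total area equals $W(S^3,g)$ and whose total Morse index is at most one. I would repackage each such collection as a single Radon measure on $S^3$ and then feed the resulting family into Theorem \ref{thm-abstract-equidistribution}, exactly as in the first proof.

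First I would define $\mathcal{Y}$ to be the set of all Radon measures on $S^3$ of the form $\mu=\sum_{j=1}^{N}n_j\,(\mathcal{H}^2|_{\Sigma_j})$, where $\{\Sigma_1,\ldots,\Sigma_N\}$ is a disjoint collection of embedded minimal two-spheres in $(S^3,g)$ and $\{n_1,\ldots,n_N\}$ positive integers satisfying $\sum_j n_j\,area(\Sigma_j,g)=W(S^3,g)$ and $\sum_j \text{ind}_g(\Sigma_j)\leq 1$. Each $\mu\in\mathcal{Y}$ has total mass exactly $W(S^3,g)$. Lemma \ref{lemm-main-general} says precisely that, for every continuous $f$ on $S^3$ with $\int_{S^3}f\,dV_g<0$, there exists $\mu\in\mathcal{Y}$ satisfying $\int_{S^3}f\,d\mu\leq 0$; that is, $\mathcal{Y}$ satisfies condition $(iii)$ of Theorem \ref{thm-abstract-equidistribution} relative to the reference measure $dV_g$. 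Item $(iv)$ of the same theorem then supplies a sequence $\{\mu_i\}\subset\mathcal{Y}$, say $\mu_i=\sum_{j=1}^{N_i}n_{i,j}(\mathcal{H}^2|_{\Sigma_{i,j}})$, such that the Ces\`aro averages $\tfrac{1}{k\,W(S^3,g)}\sum_{i=1}^{k}\mu_i$ converge weakly to $\tfrac{1}{vol(S^3,g)}\,dV_g$.

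Next I would extract a sequence of surfaces by listing, for each $i$ in order, every $\Sigma_{i,j}$ repeated exactly $n_{i,j}$ times; call the resulting sequence $\{T_m\}_{m\geq 1}$. By construction each $T_m$ is an embedded minimal two-sphere with Morse index at most one and area at most $W(S^3,g)$. The monotonicity formula provides a uniform lower bound $C>0$ on $area(\Sigma_{i,j},g)$, so each $n_{i,j}\leq W(S^3,g)/C$ and $N_i\leq W(S^3,g)/C$; in particular every batch contributes a nonzero, uniformly bounded number of terms to $\{T_m\}$, with total area equal to $W(S^3,g)$ and total $f$-integral equal to $\int_{S^3}f\,d\mu_i$.

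The remaining task, which I expect to be the main technical obstacle, is to convert the weak Ces\`aro convergence of $\{\mu_i\}$ into the ratio formula \eqref{eq-particular-equidistribution} along the concatenated sequence $\{T_m\}$. If $M_k$ denotes the number of $T_m$'s contributed by the first $k$ batches, then $\sum_{m=1}^{M_k}area(T_m,g)=k\,W(S^3,g)$ and $\sum_{m=1}^{M_k}\int_{T_m}f\,dA_g=\sum_{i=1}^{k}\int_{S^3}f\,d\mu_i$, so dividing and passing to the limit along the subsequence $k'=M_k$ yields exactly $\tfrac{1}{vol(S^3,g)}\int_{S^3}f\,dV_g$. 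For a general $k'$ with $M_{k-1}\leq k'\leq M_k$, both the numerator and the denominator in \eqref{eq-particular-equidistribution} differ from their values at $M_k$ by at most one batch's contribution, a quantity bounded by $W(S^3,g)(1+\|f\|_\infty)$, whereas the denominator $\sum_{m=1}^{k'}area(T_m,g)\geq (k-1)W(S^3,g)$ tends to infinity. This bounded-error-over-linearly-growing-denominator argument yields convergence along the full sequence $k'\to\infty$ and completes the proof.
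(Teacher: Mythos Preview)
Your approach is essentially the paper's own: apply Lemma \ref{lemm-main-general} to verify a hypothesis of Theorem \ref{thm-abstract-equidistribution} with $\mathcal{Y}$ the set of weighted area measures coming from Simon--Smith collections, and read off the equidistribution from the abstract theorem. Two small points to fix. First, what Lemma \ref{lemm-main-general} delivers is condition $(i)$ of Theorem \ref{thm-abstract-equidistribution}, not condition $(iii)$; the conditions are equivalent, so this is only a labelling slip. Second, Theorem \ref{thm-abstract-equidistribution} requires $\mathcal{Y}$ to be weak-$*$ compact, and you never say this. You have the ingredients (the monotonicity lower bound forces $N$ and the $n_j$ to be uniformly bounded), but you should state explicitly that Sharp's compactness theorem (Theorem \ref{thm-compactness}) then gives sequential compactness of $\mathcal{Y}$, exactly as the paper does.

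Your final step is a genuine, if minor, departure from the paper. The paper converts item $(iv)$ into the surface-level formula \eqref{eq-particular-equidistribution} by invoking the combinatorial selection argument of Marques--Neves--Song via the Remark in Appendix A.2, which directly produces a sequence in the set $\mathcal{W}$ of individual surface measures. You instead take the sequence $\{\mu_i\}\subset\mathcal{Y}$ from item $(iv)$, concatenate the underlying surfaces with multiplicities, and use a bounded-error-over-growing-denominator argument to pass from the subsequence $\{M_k\}$ to the full sequence. This works because every $\mu_i$ has total mass exactly $W(S^3,g)$ and each batch has uniformly bounded size; it is a clean, elementary alternative to the cited combinatorial argument, at the cost of yielding a sequence that may repeat surfaces within a batch.
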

\begin{proof}
	In view of Lemma \ref{lemm-main-general}, we can apply the abstract Theorem \ref{thm-abstract-equidistribution} in the Appendix, considering the volume element $dV_g$ as a Radon measure on $S^3$ and $\mathcal{Y}$ as the non-empty set of Radon measures of the form
\begin{equation}
	\mu (f) = \sum_{j=1}^{N}n_j \int_{\Sigma_j} f dA_g,
\end{equation} 
where $n_1,\ldots,n_N$ are positive integers and $\{\Sigma_1,\ldots,\Sigma_N\}$ is a collection of disjoint embedded minimal two-spheres in $(S^3,g)$ such that
\begin{equation*}
	\sum_{j=1}^{N} ind_{g}(\Sigma_j) \leq 1 \quad \text{and} \quad \sum_{j=1}^{N} n_j\cdot area(\Sigma_j) = W(S^3,g).
\end{equation*}
Notice that each surface $\Sigma_j$ has at least some uniform amount of area (by the monotonicity formula). Thus, the numbers $N$ and $n_1,\ldots,n_{N}$ cannot be arbitrarily big, and the set $\mathcal{Y}$ is compact in the weak-$*$ topology (the argument uses the Compactness Theorem \ref{thm-compactness} and its details have already been explained during the proof of Lemma \ref{lemm-main-general}). \\
\indent	It remains only to justify the particular presentation of the equidistribution formula \eqref{eq-particular-equidistribution}, which is slightly different from the formula in item $iv)$ in Theorem \ref{thm-abstract-equidistribution} - see the remark at the end of the Appendix A.2 for further details.
\end{proof}

\subsection{On the Yamabe invariant} Let $g$ be a metric on $S^3$ that satisfies property $(\star)$. Lemma \ref{lemm-star-necessary} and Lemma \ref{lemm-constant-volume-variations} imply that every zero average smooth function can be used to test whether $(S^3,g)$ is a local maximum of the normalised width in $[g]$ or not. If the answer is positive, each zero average smooth function on $S^3$ gives rise to an integral inequality that must be satisfied by some embedded index one minimal two-sphere in $(S^3,g)$ whose area realises the width. In the result below, we use the average of the scalar curvature to deduce another necessary condition for local maxima of the normalised width (\textit{cf}. Proposition \ref{prop-intro-Yamabe}).

\begin{prop} \label{prop-necessary-Yamabe}
	Let $[g]$ be a conformal class of Riemannian metrics on the three-sphere. Assume that $g$ satisfies property $(\star)$. If $g$ is a local maximum of the normalised width in $[g]$, then 
	\begin{equation*}
		\mathcal{Y}(S^3,[g])\frac{W(S^3,g)}{vol(S^3,g)^{\frac{2}{3}}} \leq W(S^3,g)\dashint_{S^3} R_g dV_g \leq 24\pi.
	\end{equation*}
\end{prop}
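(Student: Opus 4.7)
The plan is to split the two inequalities. The left one is essentially a tautology: by definition of $r_g = \dashint_{S^3} R_g dV_g$, we have $vol(S^3,g)^{2/3}\, r_g = \int_{S^3} R_g\, dV_g / vol(S^3,g)^{1/3}$, which bounds $\mathcal{Y}(S^3,[g])$ from above by the very definition of the Yamabe invariant. Multiplying by $W(S^3,g)/vol(S^3,g)^{2/3}$ gives the first inequality. So essentially all the work is in the second inequality $W(S^3,g)\, r_g \leq 24\pi$.

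For this, the idea is to imitate the proof of Theorem \ref{thm-roundsphere} but, instead of invoking the whole Yamabe flow, to test the local maximality of $g$ against a single Yamabe-type direction. Set $f = r_g - R_g$, which has zero average. By Lemma \ref{lemm-constant-volume-variations}, there is a smooth family $g(t) \in [g]$, $t\in [0,\epsilon)$, of constant volume with $g(0)=g$ and $\partial_t g(0) = f g$. Since $g$ satisfies property $(\star)$ and this property is open (Lemma \ref{lemm-openness}), after shrinking $\epsilon$ we may assume every $g(t)$ satisfies $(\star)$. The local maximality of $g$ in its conformal class, together with constancy of the volume, forces $W(S^3,g(t)) \leq W(S^3,g)$ on $[0,\epsilon)$.

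Now I would apply Lemma \ref{lemm-star-necessary} to the family $g(t)$ at $t=0$. This produces an embedded index one minimal two-sphere $\Sigma$ in $(S^3,g)$ with $\mathrm{area}(\Sigma,g) = W(S^3,g)$ and
\begin{equation*}
    \int_{\Sigma} \mathrm{Tr}_{(\Sigma,g)}(fg)\, dA_g \leq 0.
\end{equation*}
Since $\Sigma$ is two-dimensional, the integrand equals $2f$, so the inequality reduces to $\int_{\Sigma} (r_g - R_g)\, dA_g \leq 0$, i.e.
\begin{equation*}
    r_g\, W(S^3,g) \leq \int_{\Sigma} R_g\, dA_g.
\end{equation*}

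The final step is identical to the corresponding estimate in the proof of Theorem \ref{thm-roundsphere}: combining the Gauss equation $2K = R - 2\,\mathrm{Ric}(N,N) - |A|^2$, the universal index-one bound $\int_{\Sigma}(\mathrm{Ric}(N,N) + |A|^2)\, dA_g \leq 8\pi$, and the Gauss--Bonnet formula $\int_\Sigma K\, dA_g = 4\pi$ for a topological two-sphere yields $\int_\Sigma R_g\, dA_g \leq 24\pi$. Substituting gives $W(S^3,g)\, r_g \leq 24\pi$, which is the right-hand inequality. There is no real obstacle here; the proof is essentially a one-step version of the Yamabe-flow argument in Theorem \ref{thm-roundsphere}, together with the observation that the conformal, volume-preserving direction $f = r_g - R_g$ is exactly the one that converts the first-order maximality condition into the average scalar curvature.
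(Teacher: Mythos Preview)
Your proof is correct and follows essentially the same approach as the paper: test the zero-average function $f = r_g - R_g$ via Lemmas \ref{lemm-constant-volume-variations} and \ref{lemm-star-necessary} to obtain an index-one minimal two-sphere $\Sigma$ of area $W(S^3,g)$ with $\int_\Sigma (r_g - R_g)\, dA_g \leq 0$, then apply the Gauss equation, the index-one bound, and Gauss--Bonnet exactly as in Theorem \ref{thm-roundsphere}. Your presentation is in fact slightly more explicit than the paper's, since you spell out the use of Lemma \ref{lemm-openness} to ensure property $(\star)$ persists along the path.
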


\begin{proof}
	It follows from Lemmas \ref{lemm-star-necessary} and \ref{lemm-constant-volume-variations} that, given the zero average smooth function
	\begin{equation*}
		\dashint_{S^3}R_{g} dV_g - R_g
	\end{equation*}		
	on $(S^3,g)$, there exists an embedded index one minimal two-sphere $\Sigma$ in $(S^3,g)$ such that
	\begin{equation*}
		area(\Sigma,g)=W(S^3,g) \quad \text{and} \quad \int_{\Sigma} \left(\dashint_{S^3} R_{g}dV_g - R_{g}\right) dV_g \leq 0.
	\end{equation*}
By the same estimates used in the proof of Theorem \ref{thm-roundsphere} in Section 3.2, we conclude that 
	\begin{equation*}
		\frac{W(S^3,g)}{vol(S^3,g)^{\frac{2}{3}}} \left(\frac{\int_{S^3} R_g dV_g}{vol(S^3,g)^{\frac{1}{3}}} \right)= W(S^3,g)\dashint_{S^3} R_g dV_g \leq 24\pi
	\end{equation*}
By the definition of $\mathcal{Y}(S^3,[g])$, the proof is now finished.
\end{proof}

\indent Theorem \ref{thm-homogeneous-scalar-curvature} shows that homogeneous metrics with positive scalar curvature on the three-sphere satisfy the second inequality of the above proposition. Homogeneous metrics are stable critical points of the Hilbert-Einstein functional in their respective conformal classes (see \cite{BetPic} and \cite{Lau}); we do not know if non-round homogeneous metrics with positive scalar curvature are absolute minima.

\section*{Appendix}
\renewcommand{\thesubsection}{\Alph{subsection}}
\setcounter{subsection}{0}

\subsection{A compactness theorem for minimal two-spheres} The compactness result for a sequence of embedded minimal two-spheres of Morse index at most one and bounded area, which is used in this paper repeatedly, is described below. The statement is essentially a rephrasing of Sharp's Compactness Theorem for varying metrics (see in particular Theorem A.6 in \cite{Sha} and the comments on regularity theory in Remark A.4). It becomes slightly simpler because $S^3$ contains no embedded one-sided surfaces, a condition that rules out certain possible degenerations. Moreover, we can extract further information about the multiplicity from the assumption on the index, and about the limit from the assumption that all surfaces $\Sigma_k$ in the sequence are two-spheres: the multiplicity must be at most two and the limit $\Sigma$ must be necessarily a two-sphere as well, for under the convergence only a single catenoidal neck is possibly lost (see \cite{AmbBuzCarSha}, Theorem 4, and further discussions therein).

\begin{thmA}\label{thm-compactness}
(\textit{Cf.} \cite{Sha}, Theorem A.6 and \cite{AmbBuzCarSha}, Theorem 4). \\
	\indent Let $\{g_k\}$ be a sequence of Riemannian metrics of class $C^{q}$ on $S^3$, $q\geq 3$, and $\{\Sigma_k\}$ a sequence of embedded minimal two-spheres of class $C^q$  in $(S^3,g_k)$ for each integer $k$. \\
	\indent Assume that $g_k$ converges to a Riemannian metric $g$ in the $C^{q}$ topology, and that there exists a constant $C> 0$ such that $area(\Sigma_k,g_k) \leq C$ for every integer $k$. \\
	\indent If the two-spheres $\Sigma_k$ have Morse index at most one in $(S^3,g_k)$ for every $k$, then, after passing to a subsequence, there exists an embedded minimal two-sphere $\Sigma$ of class $C^q$ in $(S^3,g)$ and $m\in\{1,2\}$ such that 
	\begin{itemize}
		\item[$i)$] (Varifold convergence). For every continuous function $f$ on $S^3$,
		\begin{equation*}
			\lim_{k\rightarrow \infty}\int_{\Sigma_k} f dA_{g_k} = m\int_{\Sigma} fdA_g.
		\end{equation*}
		In particular,
		\begin{equation*}
			m\cdot area(\Sigma,g) = \lim_{k\rightarrow \infty} area(\Sigma_k,g_k) \leq C.
		\end{equation*}
		\item[$ii)$] The Morse index of $\Sigma$ in $(S^3,g)$ is at most one. 
		\item[$iii)$] If $\Sigma_k$ is stable in $(S^3,g_k)$ for every large enough $k$, then $m=1$ and $\Sigma$ is stable as well.
		\item[$iv)$] If $m=1$, then the sequence $\{\Sigma_k\}$ converges to $\Sigma$ as a one-sheeted graph over $\Sigma$ in the $C^{q-1}$ topology.
		\item[$v)$] If $m=2$, then, over the complement of a single point of $\Sigma$, the sequence $\{\Sigma_k\}$ converges to $\Sigma$ as a two-sheeted graph in the $C^{q-1}$ topology.  Moreover, $\Sigma$ admits a positive Jacobi function and is therefore stable.
	\end{itemize}
\end{thmA}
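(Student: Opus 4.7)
The plan is to reduce the statement to the two results already cited in the theorem's parenthetical: Sharp's compactness theorem for minimal hypersurfaces under varying ambient metrics, and the finer neck analysis of Ambrozio-Buzano-Carlotto-Sharp that controls how sequences of embedded minimal two-spheres with bounded index can degenerate. With those black boxes in hand the argument becomes a matter of extracting the stated consequences for the case of $S^3$.

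First, I would apply Theorem A.6 in \cite{Sha} to the sequence $\{(\Sigma_k, g_k)\}$. Since $g_k \to g$ in $C^q$, $\operatorname{area}(\Sigma_k, g_k) \leq C$, and the Morse indices are uniformly bounded by one, after passing to a subsequence there exist an embedded minimal surface $\Sigma$ of class $C^q$ in $(S^3, g)$ and a positive integer $m$ such that $\Sigma_k \to m\Sigma$ in the sense of varifolds; the convergence is smooth and locally graphical with $m$ sheets away from a finite set $\mathcal{S}\subset\Sigma$ whose cardinality is controlled by the total Morse index. The orientability of $S^3$ rules out one-sided surfaces, so $\Sigma$ is two-sided, and lower semicontinuity of the index under this convergence gives $\operatorname{ind}_g(\Sigma) \leq 1$. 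This settles items (i) and (ii). For (iii), if each $\Sigma_k$ is stable then Schoen-type uniform curvature estimates apply along the sequence, preventing any curvature concentration; the convergence is then one-sheeted and graphical, so $m=1$, and stability passes to $\Sigma$ by lower semicontinuity.

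To promote this to items (iv) and (v) and to bound $m$, I would invoke Theorem 4 of \cite{AmbBuzCarSha}. Their analysis says that, for a sequence of embedded minimal two-spheres of bounded index, the only way the convergence can fail to be locally graphical with multiplicity $m$ on all of $\Sigma$ is through the formation of finitely many catenoidal necks joining sheets of the limit; each such neck collapses one handle of the approximating surfaces and costs a definite amount of index. Under our hypothesis $\operatorname{ind}_{g_k}(\Sigma_k) \leq 1$, at most one catenoidal neck can form, so either $m=1$ and $\mathcal{S} = \varnothing$ (yielding the one-sheeted graphical $C^{q-1}$ convergence of (iv)), or $m=2$ and $\mathcal{S}$ consists of a single point (yielding (v)). In both cases exactly zero or one handle is lost, so the limit $\Sigma$ is again diffeomorphic to a two-sphere. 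When $m=2$, the same neck analysis exhibits a positive Jacobi function on $\Sigma$ as the rescaled limit of the height difference between the two sheets (it is nowhere negative by the graphical separation of the sheets and is strictly positive at the neck point by construction), forcing $\Sigma$ to be stable.

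The main obstacle is the last step: Sharp's theorem alone only produces a smooth limit with some arbitrary positive multiplicity and no restriction on the topology of $\Sigma$, so the refinement provided by \cite{AmbBuzCarSha} is essential both to force $m \in \{1,2\}$ and to guarantee that $\Sigma$ remains a two-sphere. Everything else is a routine reading of the varifold convergence, graphical convergence, and index lower semicontinuity built into those two results.
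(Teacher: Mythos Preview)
Your proposal is correct and matches the paper's approach exactly: the paper does not give a formal proof of this theorem at all, but only a short explanatory paragraph before the statement, noting that it is a rephrasing of Sharp's Theorem A.6 together with the neck analysis in \cite{AmbBuzCarSha}, simplified by the absence of one-sided surfaces in $S^3$ and the index-one bound (which forces at most one catenoidal neck, hence $m\in\{1,2\}$ and $\Sigma$ a two-sphere). Your write-up is in fact more detailed than what the paper provides.
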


\begin{rmkA}
The case described in item $v)$ is ruled out if one knows, for instance, that the limit three-sphere $(S^3,g)$ does not contain an embedded minimal two-sphere admitting Jacobi functions, or does not contain an embedded stable minimal two-sphere of area less than $C$. In these situations, the convergence of $\Sigma_k$ to $\Sigma$ is graphical with multiplicity $m=1$.
\end{rmkA}

\subsection{The weak-$*$ closure of the convex hull of a positive cone of Radon measures} Let $X$ be a compact topological space that is Hausdorff and second-countable. Let $C^{0}(X)$ denote the set of continuous real functions on $X$, endowed with the sup-norm $||f||_0 = \sup \{f(x)\in \mathbb{R};\, x\in X\}$ for every $f$ in $C^{0}(X)$. Under these assumptions on the space $X$, $C^{0}(X)$ is a separable Banach space. \\
\indent Let $\mathcal{M}(X)$ denote the set of (positive) Radon measures on $X$. The Riesz Representation Theorem identifies $\mathcal{M}(X)$ with the convex positive cone in the dual of $C^{0}(X)$ (= the set of all bounded linear operators on $C^{0}(X)$) consisting of all those operators $L$ such that $L(f)\geq 0$ for every non-negative function $f$ in $C^{0}(X)$. In fact, for every such linear map, there exists a (positive) Radon measure such that 
\begin{equation*}
	L(f) = \int_{X} fd\mu \quad \text{for all} \quad f\in C^{0}(X).
\end{equation*}
Moreover, $\mu$ is unique (up to the natural equivalence relation on the set of measures) and satisfies $\mu(X)=||L||= \sup \{L(f)\in \mathbb{R};\,||f||_0\leq 1 \}$ (see \cite{Sim}). \\
\indent The weak-$*$ topology on $\mathcal{M}(X)$ is the coarsest topology such that, for every $f$ in $C^{0}(X)$, the evaluation map $$ \mu \in \mathcal{M}(X) \mapsto \int_{X}fd\mu \in \mathbb{R}$$ is continuous. Under the identification of $\mathcal{M}(X)$ with a subset of the dual of $C^{0}(X)$, the weak-$*$ topology on $\mathcal{M}(X)$ is nothing but the induced weak-$*$ topology from the dual of $C^{0}(X)$. Under our assumptions on $X$, the weak-$*$ topology on $\mathcal{M}(X)$ is metrisable. In particular, compactness can be expressed in terms of sequences, and a measure $\mu$ belongs to the weak-$*$ closure of a subset $\mathcal{Y}$ of $\mathcal{M}(X)$ if and only if it there exists a sequence of measures $\mu_k$ in $\mathcal{Y}$ such that
\begin{equation*}
	\lim_{k\rightarrow +\infty} \int_{X} f d\mu_k = \int_{X} fd\mu \quad \text{for all} \quad f\in C^{0}(X).
\end{equation*}
\indent In the above setting, the following general statement holds:
\begin{thmB} \label{thm-abstract-equidistribution}
	Let $\mathcal{Y}$ be a non-empty weak-$*$ compact subset of $\mathcal{M}(X)$. The following assertions about a measure $\mu_0$ in $\mathcal{M}(X)$ are equivalent to each other:
	\begin{itemize}
		\item[$i)$] For every function $f$ in $C^{0}(X)$ such that $\int_{X}fd\mu_0 < 0$, there exists $\mu\in \mathcal{Y}$ such that $\int_{X}f d\mu \leq 0$.
		\item[$ii)$] For every function $f$ in $C^{0}(X)$ such that $\int_{X}fd\mu_0 = 0$, there exists $\mu\in \mathcal{Y}$ such that $\int_{X}f d\mu \leq 0$.
		\item[$iii)$] $\mu_0$ belongs to the weak-$*$ closure of the convex hull of the positive cone over $\mathcal{Y}$.
		\item[$iv)$] There exists a sequence $\{\mu_k\}$ in $\mathcal{Y}$ such that
		\begin{equation} \label{eq-appendix-equidistribution}
			\lim_{k\rightarrow +\infty} \frac{1}{k} \sum_{i=1}^{k}\frac{1}{\mu_i(X)}\int_{X}f d\mu_{i} = \frac{1}{\mu_{0}(X)}\int_{X} f d\mu_{0} \quad \text{for all} \quad f\in C^{0}(X).
		\end{equation}
	\end{itemize}
\end{thmB}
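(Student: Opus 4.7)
The plan is to establish the four equivalences via the cycle $(ii) \Rightarrow (iii) \Rightarrow (iv) \Rightarrow (ii)$ together with the easy equivalence $(i) \Leftrightarrow (ii)$. Throughout I assume $\mu_0 \neq 0$ and that every $\mu \in \mathcal{Y}$ satisfies $\mu(X)>0$ (as is the case in the intended applications): since $\mathcal{Y}$ is weak-* compact and $\mu \mapsto \mu(X) = \int_X 1 \, d\mu$ is weak-* continuous, this yields a uniform positive lower bound on $\mu(X)$ for $\mu \in \mathcal{Y}$, so the normalization map $\mu \mapsto \mu/\mu(X)$ carries $\mathcal{Y}$ continuously onto a weak-* compact set $\tilde{\mathcal{Y}}$ of probability measures.

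The implications involving (i), (ii), and (iv) are elementary. For $(ii) \Rightarrow (i)$, given $f$ with $\int_X f \, d\mu_0 < 0$, the constant shift $f + c$ with $c = -\int_X f \, d\mu_0/\mu_0(X) > 0$ has zero $\mu_0$-average, so (ii) furnishes $\mu \in \mathcal{Y}$ with $\int_X f \, d\mu \leq -c \mu(X) \leq 0$. For $(i) \Rightarrow (ii)$, given $f$ with $\int_X f \, d\mu_0 = 0$, apply (i) to $f - \epsilon$ as $\epsilon \searrow 0$ and extract a weak-* convergent subsequence of the resulting measures in $\mathcal{Y}$. For $(iv) \Rightarrow (ii)$: if $\int_X f \, d\mu_0 = 0$, the Cesàro averages in (iv) converge to $0$, so infinitely many terms $\mu_i(X)^{-1}\int_X f \, d\mu_i$ are non-positive, and any such $\mu_i$ serves.

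For $(ii) \Rightarrow (iii)$ I argue by contradiction. Let $K$ be the weak-* closure of the convex cone generated by $\mathcal{Y}$, and assume $\mu_0 \notin K$. Hahn-Banach separation in the locally convex space $\mathcal{M}(X)$ endowed with the weak-* topology, whose continuous linear functionals are precisely the integration maps against $C^0(X)$, produces $f \in C^0(X)$ and $\alpha \in \mathbb{R}$ with $\int_X f \, d\mu_0 < \alpha \leq \int_X f \, d\nu$ for every $\nu \in K$. Since $0 \in K$ we get $\alpha \leq 0$; since $K$ contains $t \mu$ for every $t \geq 0$ and $\mu \in \mathcal{Y}$, letting $t \to +\infty$ forces $\int_X f \, d\mu \geq 0$ for all $\mu \in \mathcal{Y}$. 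The shifted function $\tilde f = f + c$ with $c = -\int_X f \, d\mu_0/\mu_0(X) > 0$ has zero $\mu_0$-average, yet $\int_X \tilde f \, d\mu \geq c \cdot \inf_{\mathcal{Y}} \mu(X) > 0$ for every $\mu \in \mathcal{Y}$, contradicting (ii).

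The crux is $(iii) \Rightarrow (iv)$. First observe that (iii) is equivalent to $\mu_0/\mu_0(X) \in \overline{\mathrm{conv}}(\tilde{\mathcal{Y}})$: any element $\sum_j c_j \mu_j$ of the convex cone over $\mathcal{Y}$ equals $\bigl(\sum_k c_k \mu_k(X)\bigr) \cdot \sum_j \frac{c_j \mu_j(X)}{\sum_k c_k \mu_k(X)} \cdot \frac{\mu_j}{\mu_j(X)}$, a positive scalar times a convex combination in $\mathrm{conv}(\tilde{\mathcal{Y}})$, and this correspondence respects weak-* limits because $\mu \mapsto \mu(X)$ is continuous and $\mu_0(X)>0$. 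Since $C^0(X)$ is separable and all measures in $\overline{\mathrm{conv}}(\tilde{\mathcal{Y}})$ have total mass $1$, this set is weak-* metrizable; fix a compatible metric $d$. Every rational convex combination $\sum_j (p_j/q)\nu_j$ of elements of $\tilde{\mathcal{Y}}$ is literally the Cesàro average of the length-$q$ sequence listing each $\nu_j$ with multiplicity $p_j$, and such rational convex combinations are dense in $\mathrm{conv}(\tilde{\mathcal{Y}})$. A diagonal construction now builds the sequence $\{\mu_k\} \subset \mathcal{Y}$: inductively, given $\mu_1, \ldots, \mu_{n_k}$ whose normalized Cesàro average is within $1/k$ of $\mu_0/\mu_0(X)$ in $d$, pick a rational convex combination $y_{k+1}$ within $1/(k+1)$ of $\mu_0/\mu_0(X)$ and append sufficiently many concatenated copies of its defining block so that the updated endpoint average is within $1/(k+1)$ of $\mu_0/\mu_0(X)$ and, simultaneously, every partial average along the appended block stays close to $\mu_0/\mu_0(X)$. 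The main technical obstacle is precisely this uniform control of the intermediate partial averages, but it reduces to a routine triangle-inequality estimate: a partial average along the block deviates from $y_{k+1}$ by at most the $d$-diameter of $\tilde{\mathcal{Y}}$ divided by the running length of the block, and the contribution of the initial segment $\mu_1, \ldots, \mu_{n_k}$ becomes negligible once the appended block is chosen long enough relative to $n_k$.
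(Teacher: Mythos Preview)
Your overall structure and the implications $(i)\Leftrightarrow(ii)$, $(ii)\Rightarrow(iii)$, and $(iii)\Rightarrow(iv)$ follow essentially the same route as the paper (Hahn--Banach separation for $(ii)\Rightarrow(iii)$; normalisation plus rational approximation and a block-concatenation argument for $(iii)\Rightarrow(iv)$, where the paper simply cites the combinatorics of Marques--Neves--Song). Your sketch of the intermediate-partial-average control in $(iii)\Rightarrow(iv)$ is a little rough---the bound ``diameter divided by running length'' is not literally correct at the start of the first appended block---but the argument is salvageable exactly along the lines you indicate, provided one arranges $n_k$ large relative to the next block length $q_{k+1}$ (for instance by fixing the rational approximations $y_k$ in advance).

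There is, however, a genuine gap in your $(iv)\Rightarrow(ii)$ step. You assert that if the Ces\`aro averages $\tfrac{1}{k}\sum_{i=1}^k a_i$ (with $a_i=\mu_i(X)^{-1}\int_X f\,d\mu_i$) converge to $0$, then infinitely many $a_i$ are non-positive. This is false: take $a_i=1/i>0$, whose Ces\`aro averages tend to $0$. Compactness of $\mathcal{Y}$ does not help, since the sequence $\{\mu_i\}$ need not have a positive lower bound for $a_i$. The fix is immediate: prove $(iv)\Rightarrow(i)$ instead, exactly as the paper does. If $\int_X f\,d\mu_0<0$, the Ces\`aro averages converge to a strictly negative number, hence for large $k$ the partial sum $\sum_{i=1}^k a_i$ is negative, forcing some $a_i<0$. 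Since you already have $(i)\Rightarrow(ii)$, the cycle closes.
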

\begin{proof}
\indent $i) \Rightarrow ii)$: if $f$ in $C^{0}(X)$ is such that $\int_{X}fd\mu_0 = 0$, then for every positive integer $k$, the function $f_k = f-1/k\in C^{0}(X)$ is such that $\int_{X} f_k d\mu_0 < 0$. By assumption, there exists $\mu_k$ in $\mathcal{Y}$ such that $\int_{X}f_k d\mu_k \leq 0$. Since $\mathcal{Y}$ is weak-$*$ compact, passing to a subsequence we may assume that $\mu_{k_i}$ converges weakly to some measure $\mu$ in $\mathcal{Y}$. Since $f_{k_i}$ converges uniformly to $f$, it follows that $\int_{X} fd\mu = \lim\int_{X} f_{k_i} d\mu_{k_i} \leq 0$.\\
\indent $ii) \Rightarrow iii)$: by contradiction, assume that $\mu_0$ does not belong to $\mathcal{K}=$ the weak-$*$ closure of the convex hull of the positive cone over $\mathcal{Y}$. Notice that $\mathcal{K}$ is a weak-$*$ closed convex set. By the Hahn-Banach geometric separation theorem applied to the dual space of $C^{0}(X)$ endowed with the weak-$*$ topology (see \cite{Bre}, Theorem I.7), there exists $f $ in $C^{0}(X)$ such that 
\begin{equation*}
	\sup_{\mu\in \mathcal{K}}\int_{X} fd\mu  <  \int_{X} fd\mu_0.
\end{equation*}
\noindent Since $\mathcal{K}$ is a positive cone, this inequality implies that $\sup_{\mathcal{K}} \int_{X} fd\mu = 0$. In particular,
\begin{equation*}
 \int_{X} fd\mu_0 > 0 \quad \text{and} \quad \int_{X} fd\mu \leq 0 \quad \text{for all} \quad \mu \in \mathcal{Y}.
\end{equation*} 
\noindent Let $f_0 = \frac{1}{\mu_0(X)}\int_{X} fd\mu_0 - f$. Then $\int_{X} f_{0}d\mu_{0} = 0$ and, for every $\mu$ in $\mathcal{Y}$,
\begin{equation*}
 \int_{X} f_0 d\mu = \frac{\mu(X)}{\mu_0(X)}\int_{X} fd\mu_0 - \int_{X} f d\mu > 0,
\end{equation*}
\noindent a contradiction with the assumption $ii)$. \\
\indent $iii) \Rightarrow iv)$ Given a positive measure $\mu$ in $\mathcal{M}(X)$, we denote by $\overline{\mu}$ its normalisation:
\begin{equation*}
	\int_{X} fd \overline{\mu} = \frac{1}{\mu(X)}\int_{X} fd\mu \quad \text{for all} \quad f\in C^{0}(X).
\end{equation*}
\noindent Clearly, a measure $\mu_{0}$ belongs to the weak-$*$ closure of the convex hull of the positive cone over $\mathcal{Y}$ if and only is its normalisation $\overline{\mu}_{0}$ belongs to the weak-$*$ closure of the convex hull of the positive cone over $\overline{\mathcal{Y}}=\{\overline{\mu} \in \mathcal{M}(X);\, \mu \in \mathcal{Y}\}$. Without loss of generality, we can therefore assume that $\mu_0$ and all elements of $\mathcal{Y}$ are normalised: their total mass is equal to one. \\
\indent The assumption $iii)$ means that, for every positive integer $k$ there exists a positive integer $N_k$, a collection of real numbers $\alpha_{k,1},\ldots,\alpha_{k,N_k} \in [0,1]$ that add up to one, a collection of positive real numbers $\lambda_{k,1},\ldots,\lambda_{k,N_k}$, and a collection of measures $\mu_{k,1},\ldots,\mu_{k,N_k}$ in $\mathcal{Y}$ that satisfy the following property: for every $f\in C^{0}(X)$, the sequence
\begin{equation*}
	\varepsilon_k = \left\vert \sum_{i=1}^{N_k} \alpha_{k,i} \int_{X} fd(\lambda_{k,i}\mu_{k,i}) - \int_{X} f d\mu_{0}\right\vert
\end{equation*}
converges to zero as $k$ goes to infinity. \\
\indent The argument now follows closely what is explained in \cite{MarNevSon} in the proof of their main theorem. The first step is to show that, for each $k$, the positive numbers $\alpha_{k,i}\lambda_{k,i}$ can be replaced by positive rational numbers with the same denominator. In order to see this, observe that, by substituting the constant function $f=1$, we obtain $$\lim_{k\rightarrow +\infty}  \sum_{i=1}^{N_k}\alpha_{k,i}\lambda_{k,i} = 1. $$
Next, for each $k$, choose positive integers  $d_{k}$ and $c_{k,1},\ldots,c_{k,N_k}$ such that 
\begin{equation*}
	\left\vert \alpha_{k,i}\lambda_{k,i}  - \frac{c_{k,i}}{d_{k}}  \right\vert < \frac{\varepsilon_{k}}{N_k} \quad \text{for each} \quad {i=1,\dots,N_k}.
\end{equation*}
\noindent Then,
\begin{align*}
	\left\vert 1 - \frac{1}{d_k}\sum_{i=1}^{N_{k}} c_{k,i} \right| & \leq  \left\vert 1 - \sum_{i=1}^{N_k} \alpha_{k,i}\lambda_{k,i}\right\vert + \sum_{k=1}^{N_k}\left\vert \alpha_{k,i}\lambda_{k,i}  - \frac{c_{k,i}}{d_{k}}  \right\vert \\ & < \left\vert 1 - \sum_{i=1}^{N_k} \alpha_{k,i}\lambda_{k,i}\right\vert + \varepsilon_k \rightarrow 0 \quad \text{as} \quad k\rightarrow +\infty.
\end{align*}
Moreover, as all measures $\mu_{k,i}$ are normalised,
\begin{multline*}
	\left\vert \frac{1}{d_k} \sum_{i=1}^{N_{k}}c_{k,i}\int_{X} f d\mu_{k,i} - \int_{X} f d\mu_{0} \right| \\ \leq \left\vert \sum_{i=1}^{N_{k}} \frac{c_{k,i}}{d_{k}}\int_{X} f d\mu_{k,i} - \sum_{i=1}^{N_k} \alpha_{k,i}\lambda_{k,i}\int_{X} f d\mu_{k,i} \right\vert \\ + \left\vert \sum_{i=1}^{N_k} \alpha_{k,i}\lambda_{k,i}\int_{X} f d\mu_{k,i} - \int_{X} f d\mu_{0}\right\vert \\
	\leq ||f||_{0}\sum_{i=1}^{N_k}\left\vert  \frac{c_{k,i}}{d_{k}}  - \alpha_{k,i}\lambda_{k,i} \right\vert + \varepsilon_{k} \\
	\leq (1+||f||_0)\varepsilon_k \rightarrow 0 \quad \text{as} \quad k\rightarrow +\infty.
\end{multline*}
Combining the two previous estimates, we conclude that the positive integer numbers $c_{k,i}$ are such that
\begin{equation*}
	\lim_{k\rightarrow +\infty} \frac{1}{\sum_{i=1}^{N_{k}} c_{k,i}} \sum_{i=1}^{N_{k}} c_{k,i}\int_{X}f d\mu_{k,i}
	 = \int_{X} fd\mu_0.
\end{equation*}
\indent The second step consists to show that it is possible to select some of the normalised measures $\mu_{k,l}$ (possibly repeating them), independently of the function $f$, in such way that formula \eqref{eq-appendix-equidistribution} holds for every $f$ in $C^{0}(X)$. This is a combinatorial argument; the details are precisely those explained in \cite{MarNevSon}, pages 14-16, and we refer the reader to their paper. \\
\indent $iv) \Rightarrow i)$: since \eqref{eq-appendix-equidistribution} holds for a sequence $\{\mu_{k}\}$ in $\mathcal{Y}$, if $f$ in $C^{0}(X)$ is such that $\int_{X} fd\mu_{0} < 0$, then $\int_{X} fd\mu_k < 0$ for at least one element $\mu_k \in \mathcal{Y}$ of this sequence. 
\end{proof}
\begin{rmkA}
	Let $c$, $C$ and $D$ be positive constants. Assume that the compact set $\mathcal{Y}$ in Theorem \ref{thm-abstract-equidistribution} has a further structure and consists of Radon measures $\mu$ of the form
	\begin{equation*}
		\mu = \sum_{i=1}^{N}n_i\cdot \mu_i,
	\end{equation*}
	where the positive integers $N, n_1,\ldots, n_k$ are smaller than $D$ and the Radon measures $\mu_i$ belong to another subset $\mathcal{W}\subset \mathcal{Y}$ such that 
	\begin{equation*}
		0 < c \leq \eta(X) \leq C \quad \text{for all} \quad \eta\in \mathcal{W}.
	\end{equation*}
	\indent If a given Radon measure $\mu_0$ belongs to the weak-$*$ closure of the convex hull of the positive cone over $\mathcal{Y}$, then the first part of the proof of the implication $iii) \Rightarrow iv)$ in Theorem \ref{thm-abstract-equidistribution} shows that
\begin{equation*}
	\lim_{k\rightarrow +\infty} \frac{1}{\sum_{j=1}^{N_k} c_{k,j}\mu_{k,j}(X)} \sum_{j=1}^{N_k}c_{k,j}\int_{X}f d\mu_{k,j} = \frac{1}{\mu_{0}(X)} \int_{X} fd\mu_0
\end{equation*}
for all continuous functions $f$ on $X$, where the constants $c_{k,j}$ are positive integers and the measures $\mu_{k,j}$ belong to $\mathcal{Y}$. Given the structure of the Radon measures in $\mathcal{Y}$, this implies that there are positive integers $c_{k,j,m}$ and measures $\mu_{k,j,m}$ in $\mathcal{W}$ such that
\begin{multline*}
	\lim_{k\rightarrow +\infty} \frac{1}{\sum_{j=1}^{N_k} \sum_{m=1}^{M_{k,j}}c_{k,j,m}\mu_{k,j,m}(X)} \sum_{j=1}^{N_k}\sum_{m=1}^{M_{k,j}}c_{k,j,m}\int_{X}f d\mu_{k,j,m}\\ = \frac{1}{\mu_{0}(X)} \int_{X} fd\mu_0 \quad \text{for all} \quad f\in C^{0}(X).
\end{multline*}
This equation can be compared with the equation before (13) in \cite{MarNevSon}. From this point, the same abstract combinatorial argument of \cite{MarNevSon} applies as well and shows that there exists a sequence $\{\mu_i\}$ of Radon measures in $\mathcal{W}$ such that
\begin{equation*}
	\lim_{i\rightarrow \infty}\frac{1}{\sum_{i=1}^{k}\mu_i(X)}\sum_{i=1}^{k}\int_{X} fd\mu_{i} = \frac{1}{\mu_{0}(X)}\int_{X} fd\mu_0 \quad \text{for all} \quad f\in C^{0}(X).
\end{equation*}
(\textit{Cf}. \cite{MarNevSon}, pages 14-16. We remark that the uniform lower bound for the total measure of elements in $\mathcal{W}$ is used in this step).
\end{rmkA}

\end{document}